\documentclass{amsart}
\usepackage[T1]{fontenc}
\usepackage[utf8x]{inputenc}

\usepackage{tikz}
\usepackage{graphicx}
\usepackage{booktabs}
\usepackage{amsmath, amssymb}
\usepackage{float}
\usepackage{subcaption}
\usepackage{caption}
\usepackage{xcolor}
\usepackage{listings}
\usepackage{booktabs}
\usepackage{placeins}   
\usepackage{comment}
\usepackage{adjustbox}

\usepackage{tcolorbox}
\tcbuselibrary{skins,breakable}

\usepackage{lmodern}

\usepackage{listings}
\newcommand{\algtitle}[1]{#1}

\usepackage{amscd,amsmath,amssymb,amsthm,amsfonts,epsfig,graphics}
\usepackage{graphicx}
\usepackage{epstopdf}
\usepackage[round,authoryear]{natbib}

\usepackage{xcolor}

\usepackage[colorlinks=true,citecolor=blue]{hyperref}
\usepackage{mathabx} 
\usepackage{hyperref}
\usepackage{subcaption}
\usepackage{comment}
\let\oldtocsection=\tocsection
\let\oldtocsubsection=\tocsubsection
\renewcommand{\tocsection}[2]{\hspace{0em}\oldtocsection{#1}{#2}}
\renewcommand{\tocsubsection}[2]{\hspace{1.8em}\oldtocsubsection{#1}{#2}}

\usepackage{mdframed} 
\usepackage{lipsum}
\usepackage{tcolorbox} 
\usepackage{comment}  

\tcbuselibrary{theorems}

\usepackage[margin=3cm]{geometry}
\usepackage{algorithm}
\usepackage{algpseudocode}
\newtcolorbox{blueheadgreenbox}[1]{%
  enhanced, breakable,
  colback=green!7,            
  colframe=green!50!black,    
  boxrule=0.8pt, arc=1mm,
  left=1.2ex, right=1.2ex, top=1.2ex, bottom=1.2ex,
  colbacktitle=blue!70!black, 
  coltitle=white,             
  fonttitle=\bfseries,
  boxed title style={sharp corners, boxrule=0pt},
  attach boxed title to top left={xshift=2mm,yshift*=-2mm},
  title={\algtitle{#1}}}
\newtheorem{theorem}{Theorem}[section]
\vspace{10pt}
\newtheorem{proposition}[theorem]{Proposition}
\newtheorem{lemma}[theorem]{Lemma}

\theoremstyle{definition}
\newtheorem{definition}[theorem]{Definition}

\vspace{10pt}
\newtheorem{assumption}[theorem]{Assumption}

\newtheorem{expl}[theorem]{Example}
\theoremstyle{remark}
\newtheorem{remark}[theorem]{Remark}

\newtcbtheorem{mytheo}{Theorem}%
{colback=green!5,colframe=black!35!black,fonttitle=\bfseries}{th}

\newcommand{\nd}{\noindent}
\newcommand{\norm}[1]{\left\lVert#1\right\rVert}

\newcommand{\mubar}{\bar{\mu}}

\begin{document}
\pagenumbering{arabic}

\title{Finite Sample Smeariness on Spheres and Modulation-aware Tests}
\vspace{-8mm}
\maketitle
\begin{center}
  \textbf{Susovan Pal} \\
  Department of Mathematics and Data Science, Vrije Universiteit Brussel (VUB) \\
  Pleinlaan 2, B-1050 Elsene/Ixelles, Belgium \\
  \texttt{susovan.pal@vub.be, susovan97@gmail.com}

\end{center}

\noindent\textbf{Keywords:} Fr\'echet means, finite sample smeariness, variance modulation, spheres, Hessian-corrected inference, geometric statistics

\noindent\textbf{2020 Mathematics Subject Classification:}
Primary: 62H11; Secondary: 60F05, 60D05, 53C22.

\tableofcontents

\begin{abstract}
Directional and shape data often live on manifolds, where standard central limit theorems (CLT) and the associated Wald-type \(\chi^2\)-tests require curvature-dependent recalibration: the limiting normal covariance is modified by the Hessian of the Fr\'echet function. On spheres, under the positive-Hessian assumptions considered here, this curvature correction yields Type~I finite sample smeariness (FSS), meaning that the asymptotic modulation exceeds \(1\). This paper studies FSS on \(\mathbb{S}^m\) and develops modulation-aware tests.

We first show that, for absolutely continuous distributions on \(\mathbb{S}^m\) with positive definite Hessians, Type~I FSS is unavoidable under the stated classical CLT and moment assumptions. The geometric mechanism is that, for the absolutely continuous spherical distributions considered here, positive curvature of the sphere gives the strict spectral bound \(H \prec 2I\), so that the limiting modulation is strictly larger than one.

We then prove a curse-of-dimensionality result for dimensionally comparable rotationally symmetric families: their asymptotic modulation is monotone increasing with dimension and can become arbitrarily large as the support approaches a hemisphere. Motivated by these results, we derive \textit{explicit} consistent plug-in estimators of the Fr\'echet-function Hessian and use them to construct one-sample and two-sample Hessian-corrected Hotelling-type statistics with \(\chi^2\) limits. Numerical experiments on low- and high-dimensional spheres show that the proposed modulation-aware tests have rejection behavior comparable to bootstrap-based FSS corrections, while being substantially faster due to the explicit Hessian formulas.
\end{abstract}


\section{Introduction}\label{scn:introduction}

\subsection{Fr\'echet means and Wald-type tests}
\label{subsec:intro-frechet-wald}

Directional and shape data often take values in nonlinear spaces, such as spheres, shape spaces, and more general Riemannian manifolds.  In such settings the Fr\'echet mean plays the role of the Euclidean expectation.  If \(X\) is a random variable on a Riemannian manifold \(M\), with law \(\mu\) and Riemannian distance \(d\), its Fr\'echet function is
\[
F_\mu(p):=\mathbb E[d^2(p,X)],\qquad p\in M.
\]
A minimizer of \(F_\mu\) is called a population Fr\'echet mean.  Given i.i.d.\ observations \(X_1,\dots,X_n\sim X\), the empirical Fr\'echet function is \(F_n(p):=n^{-1}\sum_{i=1}^n d^2(p,X_i)\), and a measurable minimizer \(\hat\mu_n\in\operatorname*{arg\,min}_{p\in M}F_n(p)\) is called a sample Fr\'echet mean.

Central limit theorems for sample Fr\'echet means are by now classical in statistics on manifolds; see, for instance, \citet{HendriksLandsman1998,BP2,BL,HH,H_Procrustes_10}.  The important point for inference is that, even in the ordinary \(n^{-1/2}\)-rate regime, the limiting covariance is not generally the covariance of the tangent data alone.  If \(\mubar\) is the unique population Fr\'echet mean, \(U_n:=\log_{\mubar}(\hat\mu_n)\), \(\Sigma:=\operatorname{Cov}(\log_{\mubar}X)\), and \(H:=D^2(F_\mu\circ\exp_{\mubar})(0)\), then, under the usual smoothness, uniqueness, and nondegeneracy hypotheses, the non-smeary manifold CLT has the form
\[
\sqrt n\,U_n\xrightarrow{\mathcal D}
\mathcal N(0,4H^{-1}\Sigma H^{-1}).
\]
Thus the Hessian of the Fr\'echet function enters directly into the covariance of the limiting Gaussian distribution, and hence into Wald-type \(\chi^2\)-tests based on this distribution. In Euclidean space \(H=2I\), so the correction disappears and the limiting covariance reduces to the usual covariance \(\Sigma\). On \textit{curved }spaces, however, ignoring the Hessian correction can produce systematically miscalibrated tests.  This paper studies this effect on \(\mathbb S^m\), and develops explicit Hessian-corrected, modulation-aware tests.

\subsection{Variance modulation and finite sample smeariness}
\label{subsec:intro-FSS}

Following \citet{HH2,HuckemannEltznerHundrieser2021SmearinessBegetsFSS,EltznerFSSonspheres}, we use the following formulation.

\begin{definition}[\textbf{Variance modulation and finite sample smeariness}]
\label{def:FSS}
Let \(\mubar\) be a unique Fr\'echet mean of a random variable \(X\) on a Riemannian manifold \(M\), and let \(\hat\mu_n\) be a measurable sequence of sample Fr\'echet means based on an i.i.d.\ sample of size \(n\). Define
\[
V:=F_\mu(\mubar)=\mathbb E[d^2(\mubar,X)],\qquad
V_n:=\mathbb E[d^2(\mubar,\hat\mu_n)],
\]
and, whenever \(V>0\), define the \textit{finite-sample variance modulation}
\[
m_n:=\frac{nV_n}{V}.
\]
We say that the Fr\'echet mean is \emph{finite sample smeary (FSS for short)} if
\[
1<\sup_{n\in\mathbb N}m_n<\infty.
\]
More specifically, it is called
\begin{itemize}
\item[(i)] \emph{Type~I FSS} if \(\liminf_{n\to\infty}m_n>1\);
\item[(ii)] \emph{Type~II FSS} if \(\limsup_{n\to\infty}m_n=1\).
\end{itemize}
\end{definition}

\begin{remark}\label{rmk:FSS-liminf-limsup}
Unlike in the previous literature stated here, we use \(\liminf\) and \(\limsup\) in Definition~\ref{def:FSS}, rather than assuming that the modulation sequence \(m_n\) converges.  In the standard examples where the limit exists, this formulation agrees with the usual Type~I/Type~II distinction.
\end{remark}

The Euclidean benchmark is \(m_n=1\).  Thus FSS means that there exists a finite sample size $n$ for which the modulation $m_n$ exceeds $1,$ unlike in the Euclidean setting. Type~I FSS implies that, asymptotically, the modulation stays bounded away from \(1\). Type~II FSS means that the modulation may exceed \(1\) at \textit{finite} sample sizes, but returns asymptotically to its Euclidean value. It is noteworthy that on circles, both Type~I and Type~II FSS are present; see \citet{HH2}, Theorem~3.1 and Corollary~3.2.

For later use, we also record the corresponding \textit{limiting or asymptotic modulation} in the \textit{non-smeary} CLT regime (cf. Definition \ref{def:smeariness-directional}).

\begin{definition}[\textbf{Asymptotic modulation}]
\label{def:asymptotic_modulation}
Let \(\mubar\) be a unique Fr\'echet mean of \(X\) on \(M\), and let \(\hat\mu_n\) be sample Fr\'echet means. Set \(V:=\mathbb E[d^2(\mubar,X)]\), \(V_n:=\mathbb E[d^2(\mubar,\hat\mu_n)]\), and \(m_n:=nV_n/V\) for \(V>0\).  If \(X\) is non-smeary and \(\lim_{n\to\infty}m_n\) exists, we call \(m_\infty:=\lim_{n\to\infty}m_n\) the \emph{asymptotic modulation}.
\end{definition}

In the setting of Lemma~\ref{lem:L2_from_CLT} and Assumption \ref{assump:momnt-bound}, the non-smeary CLT and the required moment bound imply
\[
m_\infty=
\frac{\operatorname{trace}(4H^{-1}\Sigma H^{-1})}{\operatorname{trace}(\Sigma)}.
\]
This formula demonstrates that a suitable Hessian correction needs to be applied to the Euclidean setting, and thus the Wald-type tests in the spherical setting also need to be modified accordingly.

\subsection{Finite sample smeariness as an intermediate regime between Euclidean and smeary regimes}
\label{subsec:intro-smeariness}

The present paper is about finite sample smeariness in the \textit{non-smeary} regime.  Nevertheless, it is useful to recall how this differs from \textit{smeariness}.  Smeariness occurs when the Fr\'echet function is flatter than quadratic at the population Fréchet mean, leading to slower-than-\(n^{-1/2}\) asymptotics; see \citet{HotzHuckemann2015,HE,Eltzner2022GeometricalSmeariness}.  The degeneracy may occur in all tangent directions, or only along a proper subspace.  We use the following definition, adapted from the subspace formulation of \citet{Eltzner2022GeometricalSmeariness}.

\begin{definition}[{\textbf{Smeariness and directional smeariness}}]
\label{def:smeariness-directional}
Let \(\mubar\) be a Fr\'echet mean of a random variable \(X\) on a Riemannian manifold \(M\), and let \(\widetilde{F}_\mu:=F_\mu \circ \exp_{\mubar}\) be the lifted Fr\'echet function in exponential coordinates at \(\mubar\), viewed on \(T_{\mubar}M\). Let \(V\subset T_{\mubar}M\) be a linear subspace.

In the smooth finite-order setting considered here, we say that \(\mu\), or equivalently \(X\), is \emph{smeary along \(V\)} if \(\widetilde{F}_\mu(x)>\widetilde{F}_\mu(0)\) for all sufficiently small \(x\in V\setminus\{0\}\), the quadratic term of \(\widetilde{F}_\mu|_V\) vanishes at \(0\), and the first nonzero term in the Taylor expansion of \(\widetilde{F}_\mu|_V-\widetilde{F}_\mu(0)\) has order \(\kappa>2\) and is positive on \(V\setminus\{0\}\). We call \(\kappa\) the\textit{ smeariness exponent }along \(V\). If \(V=T_{\mubar}M\), we say that \(\mu\) is \emph{smeary}; if \(V\) is a proper nonzero subspace, we say that \(\mu\) is \emph{directionally smeary along \(V\)}.
\end{definition}

Finite sample smeariness is different.  It is \textit{not} the regime of slower asymptotic rate.  Rather, it is the intermediate regime in which the usual \(n^{-1/2}\)-rate CLT (also called BP-CLT, after \citet{BhattacharyaPatrangenaru2003}) still holds, but the Euclidean variance calibration is still wrong.

This distinction can be made precise under the upgrade from convergence in distribution to convergence of second moments, under Assumption \ref{assump:momnt-bound}.  Suppose, for example, that in some nonzero tangent direction \(e_i\), a smeary CLT gives \(n^{1/(2(r_i-1))}\langle U_n,e_i\rangle \xrightarrow{\mathcal D} Y_i\), where \(r_i>2\) and \(\mathbb E[Y_i^2]>0\). Under Assumption \ref{assump:momnt-bound}, the squares are uniformly integrable and
\[
n\,\mathbb E\|U_n\|^2
\ge
n^{(r_i-2)/(r_i-1)}
\,\mathbb E\!\left[
\bigl(n^{1/(2(r_i-1))}\langle U_n,e_i\rangle\bigr)^2
\right]
\to\infty.
\]
Since \(d^2(\mubar,\hat\mu_n)=|U_n|^2\) whenever \(\hat\mu_n\) lies in the normal neighborhood of \(\mubar\), the variance modulation \textit{diverges}.  Thus, in the smeary or directionally smeary regimes covered by such CLTs, the modulation is not merely larger than one; it becomes\textit{ unbounded}.  Finite sample smeariness is therefore a \textit{bounded, non-smeary analogue}: worse than the Euclidean case, but less singular than full or directional smeariness.

\subsection{Main results and contribution}
\label{subsec:intro-main-results}

The first main result is that Type~I FSS is unavoidable on spheres in the positive-Hessian, non-smeary CLT regime.  More precisely, Theorem~\ref{thm:typeI_hemisphere_general} shows that if \(\mu\) is absolutely continuous on \(\mathbb S^m\), \(m\ge2\), has a unique Fr\'echet mean, positive definite Hessian, satisfies the non-smeary CLT, and satisfies the moment condition in Assumption~\ref{assump:momnt-bound}, then the asymptotic modulation exists and is strictly larger than one.  

The second main result is a \textit{curse of dimensionality effect} on modulation.  For \textit{dimensionally comparable rotationally symmetric families}, introduced in Definition~\ref{def:dimensionally-comparable-distributions}, Theorem~\ref{thm:dim_monotone_modulation_rot} proves that the asymptotic modulation is strictly increasing with the sphere dimension.  It also gives an explicit high-dimensional limit of the asymptotic modulation $m_{\infty}$.  In particular, when the support approaches a hemisphere, the limiting modulation can be made arbitrarily large. 

The third contribution is \textit{inferential}.  The explicit Hessian formulas in Proposition~\ref{prop:Hessian-common} in the general-density and rotationally symmetric cases lead to consistent plug-in Hessian estimators.  Proposition~\ref{prop:Hn_consistency} proves consistency in both the general-density and rotationally symmetric settings.  These estimators are then used to construct Hessian-corrected quadratic statistics.  Proposition~\ref{prop:quadratic_chisq} and Theorem~\ref{thm:mod-aware-test} give the one-sample version, while Theorem~\ref{thm:two-sample-mod-aware-hotelling} gives the corresponding two-sample Hotelling-type statistic.  The point is that the naive tangent covariance is replaced by the curvature-corrected covariance \(4H^{-1}\Sigma H^{-1}\).

Section~\ref{sec:numerical-bootstrap-vs-hessian} compares three procedures: a naive tangent-space \(\chi^2\)-test, a bootstrap-based finite-sample-smeariness correction, and the proposed Hessian-corrected modulation-aware test.  The low-dimensional experiment on \(\mathbb S^2\) shows that the two corrected procedures behave similarly near the null.  The high-dimensional experiment on \(\mathbb S^{36}\) shows a much sharper effect: the naive test severely over-rejects, while the bootstrap-based and modulation-aware tests both correct the level.  The runtime comparison in Table~\ref{tab:runtime} shows that, in the reported high-dimensional experiment, the modulation-aware test is about \(38\) times faster per test call.

Finally, Section~\ref{subsec:microtus-real-data-diagnostic} applies the two-sample procedure to
\textit{Microtus} lower first molar preshape data on \(\mathbb S^{39}\).  The data are highly
concentrated around their Fr\'echet mean, so the empirical Hessian is nearly \(2I\) and the
modulation-aware statistic is numerically almost identical to the naive Hotelling statistic,
providing a real-data local-Euclidean sanity check.

The sharp support-threshold theory for full and directional smeariness on spheres is treated separately in \citet{Pal2026SharpSupportThresholds}.  The present paper uses that viewpoint only as background and as a source of clean non-smeary regimes.  Its main focus is finite sample smeariness, dimension-dependent modulation, and Hessian-corrected inference.

\subsection{Organization of the paper}
\label{subsec:intro-organization}

Section~\ref{scn:TE-sqdist} derives the second-order Taylor expansion of the squared Riemannian distance on \(\mathbb S^m\) in normal coordinates.  Section~\ref{scn:common-setup-derivatives} uses this expansion to obtain the Hessian formula for the Fr\'echet function, first for general densities and then under rotational symmetry; see Proposition~\ref{prop:Hessian-common}.  Section~\ref{scn:asymptotics-b-h} records the radial function \(b_m\) and its unique zero \(R_m\), which control the rotationally symmetric Hessian; see Proposition~\ref{prop:zero_bm}.

Section~\ref{scn:FSS} proves the Type~I finite sample smeariness theorem.  Lemma~\ref{lem:L2_from_CLT} gives the required moment-convergence step, Lemma~\ref{lem:spectral_bound_Hess} proves the strict spectral bound for the Hessian, and Theorem~\ref{thm:typeI_hemisphere_general} combines these ingredients to obtain \(m_\infty>1\).  Section~\ref{scn:curse-dim-modulation} proves the curse-of-dimensionality theorem for modulation, Theorem~\ref{thm:dim_monotone_modulation_rot}, and Section~\ref{subsec:numerical-curse-dim-modulation} gives a numerical illustration of this dimension effect.

Section~\ref{scn:modulation-aware-tests} develops the statistical procedures.  Proposition~\ref{prop:Hn_consistency} proves consistency of the plug-in Hessian estimators, Proposition~\ref{prop:quadratic_chisq} gives the basic quadratic-form limit, Theorem~\ref{thm:mod-aware-test} gives the one-sample modulation-aware test, and Theorem~\ref{thm:two-sample-mod-aware-hotelling} gives the two-sample Hessian-corrected Hotelling statistic.  Section~\ref{sec:numerical-bootstrap-vs-hessian} presents the simulation study comparing naive, bootstrap-based, and modulation-aware tests.  The symbolic computation used for the Taylor expansion is recorded in Appendix~\ref{scn:symbolic-computation}.

\subsection{{Acknowledgements and funding.}}

This research was supported by funding from Research Foundation--Flanders (FWO) via the Odysseus II programme no.~G0DBZ23N. The author acknowledges technical discussions with Stephan Huckemann from the University of G\"ottingen, Germany, and also encouragement by David Tewodrose at Vrije Universiteit Brussel.

\section{Taylor expansion of the Riemannian squared distance on $\mathbb{S}^m$ through order $2$}
\label{scn:TE-sqdist}

This section records the Taylor expansion, in normal coordinates at the north pole, of the squared Riemannian distance on the sphere through order two.  This second-order expansion is the local computational input used later to derive the Hessian of the Fr\'echet function.  Throughout, the expansion is taken in the first variable while the second variable is fixed away from the cut locus.

\subsection{Taylor expansion of the Riemannian squared distance function in the tangent space using the exponential map}

Let $\mathbb{S}^{m}\subset\mathbb{R}^{m+1}$ be the unit sphere, let $N\in\mathbb{S}^{m}$ be the north pole, and identify $T_{N}\mathbb{S}^{m}\cong\mathbb{R}^{m}$ with the Euclidean inner product $\langle\cdot,\cdot\rangle$ and norm $\|\cdot\|$.  Fix $v\in T_{N}\mathbb{S}^{m}$ with \(R:=\|v\|\in(0,\pi)\), and define the pulled-back squared distance function on \(T_N\mathbb S^m\) by
\begin{equation}\label{eqn:pulled-back-sq-dist}
   g(u;v):=d^{2}\big(\exp_{N}u,\exp_{N}v\big).
\end{equation}

For $a\in T_{N}\mathbb{S}^{m}$ with $\|a\|=1$, set \(\alpha:=\langle a,v\rangle\) and
\(f_{a}(t):=g(ta;v)=d^{2}(\exp_{N}(ta),\exp_{N}v)\).  Thus \(f_a\) is the line restriction of \(g(\cdot;v)\) along \(a\).  By the spherical cosine law in normal coordinates,
\begin{equation}\label{eqn:f_a}
\cos\big(d(\exp_{N}(ta),\exp_{N}v)\big)
=
\cos t\,\cos R
+
\sin t\,\frac{\sin R}{R}\,\alpha.
\end{equation}
Equivalently, if \(d(t):=d(\exp_{N}(ta),\exp_{N}v)\in(0,\pi)\), then
\(f_a(t)=d(t)^2\), where
\(d(t)=\arccos(\cos t\,\cos R+\sin t\,(\sin R/R)\alpha)\).  Expanding at \(t=0\) gives
\(f_a(t)=\sum_{k=0}^{2}C_k(R,\alpha)t^k+O(t^3)\), with
\begin{equation}\label{eqn:C}
\boxed{
\begin{aligned}
C_{0}(R,\alpha)&=R^{2},
\qquad
C_{1}(R,\alpha)=-2\alpha,\\
C_{2}(R,\alpha)&=\frac{R}{\tan R}+\alpha^{2}\Big(\frac{1}{R^{2}}-\frac{1}{R\tan R}\Big).
\end{aligned}
}
\end{equation}
The symbolic calculation of these Taylor coefficients is recorded in Appendix~\ref{scn:symbolic-computation}.  For \(k=1,2\), whenever \(g(\cdot;v)\) is \(k\) times differentiable at \(0\), the line-restriction identity gives
\(f_a^{(k)}(0)=D^kg(0;v)[a,\dots,a]=k!C_k(R,\alpha)\).

Now write \(u\neq 0\) as \(u=ra\), with \(r:=\|u\|\) and \(\|a\|=1\).  Then \(g(ra;v)=f_a(r)\), and \(\alpha=\langle a,v\rangle=\langle u,v\rangle/\|u\|\).  Hence, for fixed \(v\), the expansion through degree \(2\) is
\begin{equation}\label{eqn:g-eqn}
\boxed{
g(u;v)
=
R^{2}
-2\langle u,v\rangle
+
\frac{R}{\tan R}\,\|u\|^{2}
+
\Big(\frac{1}{R^{2}}-\frac{1}{R\tan R}\Big)\,\langle u,v\rangle^{2}
+O(\|u\|^{3}),
\qquad R:=\|v\|.
}
\end{equation}

\subsection{Explicit formula for the Hessian of the squared distance}
\label{subsec:derivatives-g}

We next extract from \eqref{eqn:g-eqn} the second derivative of \(g(\cdot;v)\) at \(0\).  Fix \(v=R\Theta\), where \(R:=\|v\|\in(0,\pi)\) and \(\Theta\in\mathbb{S}^{m-1}\).  The derivatives below are always taken with respect to the \(u\)-variable, with \(v\) fixed.

In the Euclidean vector space \(T_N\mathbb S^m\), one has \(D^2_u\|u\|^2[w,w]=2\|w\|^2\) and \(D^2_u\langle u,v\rangle^2[w,w]=2\langle w,v\rangle^2=2R^2\langle w,\Theta\rangle^2\).  Therefore \eqref{eqn:g-eqn} gives, for every \(w\in T_N\mathbb S^m\),
\begin{equation}\label{eqn:Hess-g}
\boxed{
D_u^2 g(0;R\Theta)[w,w]
=
2\frac{R}{\tan R}\|w\|^2
+
2\Bigl(1-\frac{R}{\tan R}\Bigr)\langle w,\Theta\rangle^2.
}
\end{equation}


\section{Setup and Hessian formulas}
\label{scn:common-setup-derivatives}
\label{scn:support-rot-symm}
\label{scn:support-general-density}

This section records the common setup and the Hessian formulas for the population Fr\'echet function on \(\mathbb S^m\).  We first treat an arbitrary absolutely continuous density, not assumed to be rotationally symmetric, and then obtain the rotationally symmetric formula as a reduction.  The pointwise expansion of the squared Riemannian distance \(g(u;v)=d^2(\exp_Nu,\exp_Nv)\) is the one derived in Section~\ref{scn:TE-sqdist}.

\subsection{General densities and rotational symmetry}
\label{scn:set-up}
\label{scn:set-up-general-density}

Let \(\mu\) be an absolutely continuous probability measure on \(\mathbb S^m\).  After applying an isometry, we take a population Fr\'echet mean to be the north pole \(N\), and write \(\mubar=N\).  Since \(\operatorname{Cut}(N)=\{-N\}\) and \(\mu\) is absolutely continuous, \(\mu(\{-N\})=0\).  Thus \(V:=\exp_N^{-1}(X)\) is defined \(\mu\)-a.s., and we write \(V=R\Theta\), with \(R\in(0,\pi)\) and \(\Theta\in\mathbb S^{m-1}\).  In these coordinates, \(d\operatorname{vol}_{\mathbb S^m}=(\sin R)^{m-1}\,dR\,d\Theta\).

Assume that \(\widetilde\mu:=(\exp_N^{-1})_\#\mu\) has density \(\rho(R,\Theta)\) in polar coordinates, so that
\begin{equation}
\label{eqn:general-density-polar}
    d\widetilde\mu(R,\Theta)=\rho(R,\Theta)(\sin R)^{m-1}\,dR\,d\Theta,
    \qquad
    \int_0^\pi\!\!\int_{\mathbb S^{m-1}}\rho(R,\Theta)(\sin R)^{m-1}\,d\Theta\,dR=1 .
\end{equation}
The lifted Fr\'echet function in these coordinates is
\begin{equation}\label{eqn:Frechet-general-polar}
    \widetilde F_\mu(u):=F_\mu(\exp_Nu)
    =
    \int_0^\pi\!\!\int_{\mathbb S^{m-1}}
    g(u;R\Theta)\rho(R,\Theta)(\sin R)^{m-1}\,d\Theta\,dR .
\end{equation}
The rotationally symmetric case is the specialization \(\rho(R,\Theta)=\rho(R)\), giving
\begin{equation}
\label{eqn:Fréchet-rot-symm-double-int}
    \widetilde F_\mu(u)
    =
    \int_0^\pi\!\!\int_{\mathbb S^{m-1}}
    g(u;R\Theta)\rho(R)(\sin R)^{m-1}\,d\Theta\,dR .
\end{equation}

\begin{assumption}[\textbf{Cut-locus avoidance}]
\label{assum:density-assumption}
We assume that there exists \(\varepsilon_0>0\) such that \(\rho(R,\Theta)=0\) for a.e. \((R,\Theta)\) with \(R\in(\pi-\varepsilon_0,\pi)\).  In the rotationally symmetric case this means \(\rho(R)=0\) for \(R\) sufficiently close to \(\pi\).  Under this assumption, the integrand is smooth in \(u\) for \(u\) near \(0\) and is uniformly separated from the cut locus. Hence differentiation under the integral sign at \(u=0\), in particular up to second order, is justified; compare \citet{tran2020sampling}, Proposition 3.2.1.
\end{assumption}

\subsection{Formula for the Hessian of the Fréchet function and its rotationally symmetric reduction}
\label{para:Hess-general-density}

\begin{proposition}[\textbf{Formula for the Hessian of the Fréchet function}]
\label{prop:Hessian-common}
Assume Assumption~\ref{assum:density-assumption}.  Let \(H:=D^2\widetilde F_\mu(0)\) denote the Hessian of the lifted Fr\'echet function at \(0\in T_N\mathbb S^m\), equivalently the Hessian of \(F_\mu\) at \(\mubar=N\).  Then
\begin{equation}
\label{eqn:Hessian-general-density}
\boxed{
    H
    =
    2\int_0^\pi\!\!\int_{\mathbb S^{m-1}}
    \left[
    \frac{R}{\tan R}I_m+
    \left(1-\frac{R}{\tan R}\right)\Theta\Theta^{\mathsf T}
    \right]
    \rho(R,\Theta)(\sin R)^{m-1}\,d\Theta\,dR .
}
\end{equation}
Equivalently, for every \(w\in T_N\mathbb S^m\),
\begin{equation}
\label{eqn:Hessian-general-bilinear-form}
    w^{\mathsf T}Hw
    =
    2\int_0^\pi\!\!\int_{\mathbb S^{m-1}}
    \left[
    \frac{R}{\tan R}\|w\|^2+
    \left(1-\frac{R}{\tan R}\right)\langle w,\Theta\rangle^2
    \right]\rho(R,\Theta)(\sin R)^{m-1}\,d\Theta\,dR .
\end{equation}
In the rotationally symmetric case, define
\begin{equation}\label{eqn:b}
    b_m(R):=1+(m-1)R\cot R .
\end{equation}
Then
\begin{equation}
\label{eqn:Hess}
\boxed{
    H
    =
    \frac{2\operatorname{vol}(\mathbb S^{m-1})}{m}
    \left(\int_0^\pi \rho(R)(\sin R)^{m-1}b_m(R)\,dR\right)I_m .
}
\end{equation}
Thus, under rotational symmetry, the Hessian is singular if and only if it vanishes, equivalently if and only if the scalar integral in \eqref{eqn:Hess} vanishes.
\end{proposition}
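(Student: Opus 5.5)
The proof will differentiate the lifted Fréchet function \eqref{eqn:Frechet-general-polar} twice under the integral sign at \(u=0\), which Assumption~\ref{assum:density-assumption} licenses. Under that assumption the integrand \(g(u;R\Theta)\) is smooth in \(u\) for \(\|u\|\) small, uniformly in \(R\in(0,\pi-\varepsilon_0]\) and \(\Theta\in\mathbb S^{m-1}\). Its second derivatives in \(u\) are therefore bounded uniformly on a neighborhood of \(0\). Since \(\rho(R,\Theta)(\sin R)^{m-1}\,d\Theta\,dR\) is a probability measure, dominated convergence gives \(H=\int D_u^2g(0;R\Theta)\,d\widetilde\mu\).

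Next I insert the pointwise Hessian \eqref{eqn:Hess-g} into this integral. This gives the bilinear-form identity \eqref{eqn:Hessian-general-bilinear-form} directly. The matrix identity \eqref{eqn:Hessian-general-density} then follows by polarization, using \(\langle w,\Theta\rangle^2=w^{\mathsf T}\Theta\Theta^{\mathsf T}w\).

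The integrand \(R/\tan R\) is unbounded near \(R=0\)? No: \(R\cot R\to1\) as \(R\to0\), and as \(R\to\pi\) the density vanishes, so the integrand is bounded on the support. Near \(R=0\) the expression \eqref{eqn:Hess-g} is also consistent with the Euclidean value \(2\|w\|^2\). The null set \(R=0\), where \(v=0\) and \(\Theta\) is undefined, is irrelevant because \(\mu\) is absolutely continuous.

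For the rotationally symmetric case I set \(\rho(R,\Theta)=\rho(R)\) and perform the \(\Theta\)-integral first, using
\[
\int_{\mathbb S^{m-1}}d\Theta=\operatorname{vol}(\mathbb S^{m-1}),\qquad \int_{\mathbb S^{m-1}}\Theta\Theta^{\mathsf T}\,d\Theta=\frac{\operatorname{vol}(\mathbb S^{m-1})}{m}I_m .
\]
The second identity holds because the integral is invariant under orthogonal conjugation, so it is a multiple of \(I_m\), and taking traces fixes the constant. The bracket then integrates to
\[
\operatorname{vol}(\mathbb S^{m-1})\Bigl[R\cot R+\tfrac1m(1-R\cot R)\Bigr]I_m=\frac{\operatorname{vol}(\mathbb S^{m-1})}{m}\bigl(1+(m-1)R\cot R\bigr)I_m,
\]
which is exactly \(b_m(R)\) from \eqref{eqn:b}. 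Substituting gives \eqref{eqn:Hess}.

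The final claim follows because \(H\) is a scalar multiple \(cI_m\). Such a matrix is singular if and only if \(c=0\), and \(c=0\) exactly when the scalar integral in \eqref{eqn:Hess} vanishes.

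The only genuinely delicate step is the uniform justification of differentiating under the integral sign. This requires checking that the third-order remainder in \eqref{eqn:g-eqn}, or equivalently the second derivatives of \(g\), is uniformly bounded for \(R\le\pi-\varepsilon_0\) and \(u\) near \(0\). The paper defers this to Assumption~\ref{assum:density-assumption} and the cited result of \citet{tran2020sampling}, so I will invoke it rather than redo it.
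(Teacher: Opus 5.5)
Your proposal is correct and follows the paper's proof: you differentiate the lifted Fr\'echet function twice under the integral sign (licensed by cut-locus avoidance), insert the pointwise Hessian \eqref{eqn:Hess-g}, pass to the matrix form, reduce the angular integral in the rotationally symmetric case, and read off the singularity criterion from $H$ being scalar. The only difference is cosmetic: you obtain $\int_{\mathbb S^{m-1}}\Theta\Theta^{\mathsf T}\,d\Theta=\frac{\operatorname{vol}(\mathbb S^{m-1})}{m}I_m$ from orthogonal invariance plus a trace computation, where the paper cites the Funk--Hecke formula.
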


\begin{proof}
By \eqref{eqn:Frechet-general-polar}, Assumption~\ref{assum:density-assumption}, and the pointwise Hessian formula \eqref{eqn:Hess-g}, differentiating under the integral sign gives \eqref{eqn:Hessian-general-bilinear-form}, hence \eqref{eqn:Hessian-general-density}.  If \(\rho(R,\Theta)=\rho(R)\), then the Funk--Hecke formula gives
\[
    \int_{\mathbb S^{m-1}}\langle w,\Theta\rangle^2\,d\Theta
    =
    \frac{\operatorname{vol}(\mathbb S^{m-1})}{m}\|w\|^2;
\]
see, for example, \citet{ChafaiFunkHecke}.  Therefore \eqref{eqn:Hess} follows.  The final assertion follows because the Hessian is then a scalar multiple of \(I_m\).
\end{proof}

\section{Radial threshold function \(b_m\) and its zero}
\label{scn:asymptotics-b-h}

Under rotational symmetry, the Hessian formula \eqref{eqn:Hess} shows that the lifted Fr\'echet function is governed, at second order, by the radial function \(b_m\) defined in \eqref{eqn:b}.  We record here the elementary sign structure of \(b_m\).  Its unique zero \(R_m\) marks the Hessian threshold used below for the rotationally symmetric non-smeary regime.

\subsection{Analysis of \(b_m\)}

\begin{proposition}[\textbf{The Hessian threshold}]
\label{prop:zero_bm}
Let \(m\ge2\), and following \eqref{eqn:b}, let \(b_m:(0,\pi)\to\mathbb R\) be defined by \(b_m(R)=1+(m-1)R\cot R\).  Then \(b_m\) is strictly decreasing on \((0,\pi)\), has a unique zero
\begin{equation}\label{eqn:R_m}
    R_m\in(\pi/2,\pi),
\end{equation}
and satisfies \(b_m(R)>0\) for \(R\in(0,R_m)\) and \(b_m(R)<0\) for \(R\in(R_m,\pi)\).
\end{proposition}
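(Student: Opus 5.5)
The plan is to reduce everything to the monotonicity of the auxiliary function \(\varphi(R):=R\cot R\) on \((0,\pi)\). Since \(m\ge2\), we have \(m-1>0\), and \(b_m=1+(m-1)\varphi\). So strict decrease of \(b_m\) is equivalent to strict decrease of \(\varphi\).

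First I will compute
\[
\varphi'(R)=\cot R-\frac{R}{\sin^2R}=\frac{\sin R\cos R-R}{\sin^2R}=\frac{\tfrac12\sin 2R-R}{\sin^2 R}.
\]
The key elementary inequality is \(\sin x<x\) for \(x>0\). Applying it with \(x=2R\) gives \(\tfrac12\sin 2R<R\), hence \(\varphi'(R)<0\) on \((0,\pi)\). Therefore \(b_m'=(m-1)\varphi'<0\), and \(b_m\) is strictly decreasing. This derivative sign check is the only step needing any care, and it is routine once the numerator is written as \(\tfrac12\sin2R-R\).

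Next I will locate the zero using boundary behavior and the intermediate value theorem.
\begin{itemize}
\item As \(R\to0^+\), \(\varphi(R)\to1\), so \(b_m(R)\to m>0\).
\item At \(R=\pi/2\), \(\cot R=0\), so \(b_m(\pi/2)=1>0\).
\item As \(R\to\pi^-\), \(\cot R\to-\infty\) while \(R\to\pi\), so \(b_m(R)\to-\infty\).
\end{itemize}
Since \(b_m\) is continuous on \((0,\pi)\), it has a zero in \((\pi/2,\pi)\). Strict monotonicity makes this zero unique on all of \((0,\pi)\), and we call it \(R_m\).

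Finally, strict decrease yields the sign pattern. For \(R<R_m\) we get \(b_m(R)>b_m(R_m)=0\), and for \(R>R_m\) we get \(b_m(R)<0\). Because \(b_m(\pi/2)=1>0\), we also have \(R_m>\pi/2\), which confirms \eqref{eqn:R_m}.
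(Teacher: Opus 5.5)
Your proposal is correct and follows the paper's proof essentially step for step. You reduce to the monotonicity of \(R\cot R\), show the derivative's numerator \(\sin R\cos R-R\) is negative, and combine the boundary limits and \(b_m(\pi/2)=1\) with strict monotonicity. The only cosmetic difference is that you get the numerator's sign from \(\sin x<x\) with \(x=2R\), whereas the paper differentiates it (\(s'(R)=\cos 2R-1<0\), \(s(0)=0\)).
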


\begin{proof}
Set \(q(R):=R\cot R\). Then
\[
    q'(R)=\cot R-R\csc^2R=\frac{\sin R\cos R-R}{\sin^2R}.
\]
The numerator \(s(R):=\sin R\cos R-R\) satisfies \(s(0)=0\) and \(s'(R)=\cos(2R)-1<0\) on \((0,\pi)\).  Hence \(q'(R)<0\), so \(b_m\) is strictly decreasing.  Moreover, \(q(R)\to1\) as \(R\downarrow0\), while \(q(R)\to-\infty\) as \(R\uparrow\pi\).  Thus \(b_m\) has a unique zero.  Since \(b_m(\pi/2)=1>0\), this zero lies in \((\pi/2,\pi)\), and the stated signs follow from strict monotonicity.
\end{proof}


\section{Finite sample smeariness in the non-smeary regime}
\label{scn:FSS}

This section proves that, on spheres of dimension \(m\ge2\), the ordinary non-smeary central limit regime is still finite-sample smeary of Type I whenever the Hessian is positive definite and the corresponding second moments converge.  The mechanism is second-order: the Hessian correction in the Fr\'echet mean CLT inflates the limiting trace because of positive curvature.

We start with a technical lemma used to upgrade convergence in distribution to convergence of second moments.

\begin{lemma}[\textbf{Convergence of second moments under a uniform \((2+\delta)\) bound}]
\label{lem:L2_from_CLT}
Let \(Y_n\) be a sequence of \(\mathbb R^m\)-valued random variables.  Assume that \(Y_n\rightarrow_{\mathcal D}Y\) and that there exists \(\delta>0\) such that
\[
    \sup_{n\ge1}\mathbb E\|Y_n\|^{2+\delta}<\infty .
\]
Then \(\mathbb E\|Y_n\|^2\to \mathbb E\|Y\|^2\).
\end{lemma}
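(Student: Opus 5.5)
The plan is the standard uniform-integrability argument: pass to the squared norms, show they are uniformly integrable, and then apply the convergence-of-moments theorem for weakly convergent, uniformly integrable real variables.

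\emph{Step 1 (continuous mapping).} Since \(x\mapsto\|x\|^2\) is continuous on \(\mathbb R^m\), the continuous mapping theorem gives \(Z_n:=\|Y_n\|^2\rightarrow_{\mathcal D}Z:=\|Y\|^2\) as real, nonnegative random variables.

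\emph{Step 2 (uniform integrability).} Put \(C:=\sup_n\mathbb E\|Y_n\|^{2+\delta}<\infty\). On the event \(\{Z_n>K\}\) we have \(Z_n\le Z_n^{1+\delta/2}K^{-\delta/2}\). Hence
\[
\sup_n\mathbb E\bigl[Z_n\mathbf 1_{\{Z_n>K\}}\bigr]\le C\,K^{-\delta/2}\to0 \qquad (K\to\infty),
\]
so the family \(\{Z_n\}\) is uniformly integrable.

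\emph{Step 3 (finiteness of the limit moment).} The function \(x\mapsto\|x\|^{2+\delta}\) is continuous and nonnegative, so the portmanteau theorem (Fatou's lemma for weak convergence) gives \(\mathbb E\|Y\|^{2+\delta}\le\liminf_n\mathbb E\|Y_n\|^{2+\delta}\le C\). In particular \(\mathbb E\|Y\|^2<\infty\), so the target quantity is finite.

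\emph{Step 4 (convergence of moments).} We conclude with the truncation argument. Let \(h_K(z):=\min(z,K)\), which is bounded and continuous on \([0,\infty)\). Then \(\mathbb E h_K(Z_n)\to\mathbb E h_K(Z)\) for each fixed \(K\). Moreover,
\[
0\le \mathbb E Z_n-\mathbb E h_K(Z_n)\le\mathbb E\bigl[Z_n\mathbf 1_{\{Z_n>K\}}\bigr]\le CK^{-\delta/2},
\]
and the same bound holds with \(Z\) in place of \(Z_n\) by Step 3. A three-term triangle inequality then gives \(\limsup_n|\mathbb E Z_n-\mathbb E Z|\le 2CK^{-\delta/2}\) for every \(K>0\). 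Letting \(K\to\infty\) yields \(\mathbb E\|Y_n\|^2\to\mathbb E\|Y\|^2\).

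Alternatively, one could invoke Skorokhod's representation together with Vitali's convergence theorem.

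There is no genuine obstacle. The only point needing care is Step 3, the finiteness of \(\mathbb E\|Y\|^{2+\delta}\), which is needed to bound the tail term for the limit variable \(Z\). It follows from lower semicontinuity of expectations of nonnegative continuous functions under weak convergence.
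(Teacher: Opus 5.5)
Your proposal is correct and follows the same route as the paper: the \((2+\delta)\) bound gives uniform integrability of \(\{\|Y_n\|^2\}\), and convergence in distribution together with uniform integrability then yields \(\mathbb E\|Y_n\|^2\to\mathbb E\|Y\|^2\). The paper cites \citet{Billingsley1999} for that final step, whereas you prove it directly by truncation, including the finiteness of \(\mathbb E\|Y\|^{2+\delta}\) via the portmanteau/Fatou bound.
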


\begin{proof}
The moment bound implies uniform integrability of \(\{\|Y_n\|^2\}_{n\ge1}\).  The conclusion is the standard consequence of convergence in distribution together with uniform integrability; see \citet{Billingsley1999}.
\end{proof}

We shall use the following moment assumption for sample Fr\'echet means.

\begin{assumption}[\textbf{Uniform moment bound}]
\label{assump:momnt-bound}
Let \(\widehat\mu_n\) be a measurable sequence of intrinsic sample Fr\'echet means and set \(U_n:=\exp_N^{-1}(\widehat\mu_n)\).  We assume that there exists \(\delta>0\) such that
\[
    \sup_{n\ge1}\mathbb E\bigl[\|\sqrt n\,U_n\|^{2+\delta}\bigr]<\infty .
\]
\end{assumption}

\begin{lemma}[\textbf{Strict spectral upper bound for the Hessian}]
\label{lem:spectral_bound_Hess}
Let \(\mu\) be absolutely continuous on \(\mathbb S^m\), \(m\ge2\), with unique Fr\'echet mean \(N\), and assume that \(\mu\) satisfies the setup of Section~\ref{scn:set-up-general-density} and Assumption~\ref{assum:density-assumption}.  Let \(H:=D^2\widetilde F_\mu(0)\) and set \(M:=\frac12 H\).  Then \(\lambda_{\max}(M)<1\).
\end{lemma}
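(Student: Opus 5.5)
Using the bilinear form \eqref{eqn:Hessian-general-bilinear-form} from Proposition~\ref{prop:Hessian-common}, for a unit vector \(w\in T_N\mathbb S^m\) we have
\[
w^{\mathsf T}Mw=\int_0^\pi\!\!\int_{\mathbb S^{m-1}}\Bigl[\frac{R}{\tan R}+\Bigl(1-\frac{R}{\tan R}\Bigr)\langle w,\Theta\rangle^2\Bigr]\rho(R,\Theta)(\sin R)^{m-1}\,d\Theta\,dR .
\]
The plan is to bound the bracket pointwise by \(1\), with strict inequality almost everywhere. Writing \(q(R):=R\cot R\) and \(c:=\langle w,\Theta\rangle^2\in[0,1]\), the bracket becomes
\[
q(R)(1-c)+c=1-(1-c)\bigl(1-q(R)\bigr).
\]
From the proof of Proposition~\ref{prop:zero_bm}, \(q\) is strictly decreasing on \((0,\pi)\) with \(q(0^+)=1\). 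Hence \(1-q(R)>0\) for every \(R\in(0,\pi)\), and the bracket is \(\le 1\), with equality exactly when \(c=1\), i.e.\ \(\Theta=\pm w\). This is the point where positive curvature enters: the Euclidean value of the bracket would be identically \(1\).

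Integrating against the probability density gives \(w^{\mathsf T}Mw\le 1\). For strictness, the equality set \(\{(R,\Theta):\Theta=\pm w\}\) has measure zero for \(d\Theta\,dR\) because \(m\ge2\): the set \(\{\pm w\}\subset\mathbb S^{m-1}\) is null when \(m-1\ge1\). Since \(\mu\) is absolutely continuous, it gives this set zero mass. Therefore the nonnegative function \((1-c)(1-q(R))\) is strictly positive \(\widetilde\mu\)-a.s., its integral is strictly positive, and so \(w^{\mathsf T}Mw<1\) for each unit \(w\).

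To pass to \(\lambda_{\max}(M)\), note that the unit sphere of \(T_N\mathbb S^m\) is compact and \(w\mapsto w^{\mathsf T}Mw\) is continuous, so the maximum is attained at some unit \(w^*\). Then \(\lambda_{\max}(M)=w^{*\mathsf T}Mw^*<1\). Alternatively, one can apply the strict bound directly to a top eigenvector.

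The only delicate step is ensuring that \(q\) and the integrand are integrable, so that the integrals are finite and the strict pointwise inequality yields a strict integral inequality. This is supplied by Assumption~\ref{assum:density-assumption}. Away from \(R=\pi\), \(q\) is bounded on \((0,\pi-\varepsilon_0]\) and extends continuously to \(R=0\), so the integrand is bounded on the support of \(\rho\). Strictness then follows from the standard fact that the integral of an a.s.\ positive function against a probability measure is positive.
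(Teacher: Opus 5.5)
Your proof is correct and is essentially the paper's argument. You bound the bracket $\frac{R}{\tan R}+\bigl(1-\frac{R}{\tan R}\bigr)\langle w,\Theta\rangle^2$ pointwise by $1$ using $R\cot R<1$, note that equality holds only on the $\widetilde\mu$-null set $\{\Theta=\pm w\}$, and pass to $\lambda_{\max}(M)$ by compactness of the unit sphere (or via a top eigenvector); your remark that the nullity of $\{\pm w\}\subset\mathbb S^{m-1}$ is exactly where $m\ge2$ enters usefully makes explicit a point the paper leaves implicit.
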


\begin{proof}
Let \(a(R):=R/\tan R\).  As in the proof of Proposition~\ref{prop:zero_bm}, \(a(R)<1\) for \(R\in(0,\pi)\).  For \(\|w\|=1\), the Hessian formula \eqref{eqn:Hessian-general-bilinear-form} gives
\begin{equation}\label{eq:lmaxM_strict}
\begin{aligned}
w^{\mathsf T}Mw
&=\int_0^\pi\!\!\int_{\mathbb S^{m-1}}
\Big(a(R)+(1-a(R))\langle w,\Theta\rangle^2\Big)
\rho(R,\Theta)(\sin R)^{m-1}\,d\Theta\,dR\\
&\le
\int_0^\pi\!\!\int_{\mathbb S^{m-1}}
\Big(a(R)+(1-a(R))\Big)
\rho(R,\Theta)(\sin R)^{m-1}\,d\Theta\,dR
=1 .
\end{aligned}
\end{equation}
Since \(\rho\) is a density with respect to \((\sin R)^{m-1}dR\,d\Theta\), the equality set \(\{\Theta=\pm w\}\) has \(\mu\)-measure zero.  Hence \(\langle w,\Theta\rangle^2<1\) on a set of full \(\mu\)-measure, and the inequality in \eqref{eq:lmaxM_strict} is strict for every unit \(w\).  By continuity of \(w\mapsto w^{\mathsf T}Mw\) and compactness of the unit sphere in \(T_N\mathbb S^m\), the supremum over \(\|w\|=1\) is also strictly smaller than \(1\).  Thus \(\lambda_{\max}(M)<1\).
\end{proof}

\begin{theorem}[\textbf{Type I finite-sample smeariness in the non-smeary regime}]
\label{thm:typeI_hemisphere_general}
Let \(m\ge2\), and let \(\mu\) be absolutely continuous on \(\mathbb S^m\) with unique Fr\'echet mean \(N\).  Assume that \(\mu\) satisfies the setup of Section~\ref{scn:set-up-general-density} and Assumption~\ref{assum:density-assumption}.  Let \(\widehat\mu_n\) be a measurable sequence of intrinsic sample Fr\'echet means, set \(U_n:=\exp_N^{-1}(\widehat\mu_n)\), and let
\[
    \Sigma:=\operatorname{Cov}\bigl[\exp_N^{-1}(X)\bigr],
    \qquad
    H:=D^2\widetilde F_\mu(0).
\]
Assume that \(H\) is positive definite and that the non-smeary Fr\'echet mean CLT holds:
\[
    \sqrt n\,U_n\rightarrow_{\mathcal D}\mathcal N(0,4H^{-1}\Sigma H^{-1}) .
\]
Assume also Assumption~\ref{assump:momnt-bound}.  Then \(\mu\) is Type I finite sample smeary.  More precisely, the asymptotic modulation exists and satisfies \(m_\infty>1\).
\end{theorem}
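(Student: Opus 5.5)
The argument has three steps. First, identify \(V_n\) with \(\mathbb E\|U_n\|^2\). Second, pass to the limit using Lemma~\ref{lem:L2_from_CLT}. Third, compare traces using Lemma~\ref{lem:spectral_bound_Hess}.

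For the first step, note that \(\widehat\mu_n\) avoids the cut point \(-N\) almost surely, so \(U_n\) is well defined with \(\|U_n\|<\pi\). On this event \(d(N,\widehat\mu_n)=\|U_n\|\), hence \(V_n=\mathbb E\|U_n\|^2\) and
\[
m_n=\frac{\mathbb E\|\sqrt n\,U_n\|^2}{V}.
\]
We also record that \(V=\mathbb E\|\exp_N^{-1}X\|^2\). Since \(N\) is the Fréchet mean, the first-order condition \(\nabla\widetilde F_\mu(0)=0\) holds. By the linear coefficient \(C_1=-2\alpha\) in \eqref{eqn:g-eqn}, this gives \(\mathbb E[\exp_N^{-1}X]=0\). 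Therefore \(V=\operatorname{trace}\Sigma\), and \(V>0\) by absolute continuity.

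For the second step, apply Lemma~\ref{lem:L2_from_CLT} with \(Y_n=\sqrt n\,U_n\). The \(2+\delta\) moment bound it requires is exactly Assumption~\ref{assump:momnt-bound}, and the limit is \(Y\sim\mathcal N(0,4H^{-1}\Sigma H^{-1})\). This yields
\[
m_n\to m_\infty=\frac{\operatorname{trace}(4H^{-1}\Sigma H^{-1})}{\operatorname{trace}\Sigma}=\frac{\operatorname{trace}(M^{-1}\Sigma M^{-1})}{\operatorname{trace}\Sigma},
\qquad M:=\tfrac12H .
\]
In particular the limit exists, so \(\liminf_n m_n=m_\infty\).

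The third step is the key inequality \(\operatorname{trace}(M^{-1}\Sigma M^{-1})>\operatorname{trace}\Sigma\). The matrix \(M\) is symmetric and positive definite, and by Lemma~\ref{lem:spectral_bound_Hess} it satisfies \(\lambda_{\max}(M)<1\). Diagonalize \(M=Q\Lambda Q^{\mathsf T}\) with \(0<\lambda_i<1\), and set \(\tilde\Sigma=Q^{\mathsf T}\Sigma Q\). Then
\[
\operatorname{trace}(M^{-1}\Sigma M^{-1})=\sum_i\lambda_i^{-2}\tilde\Sigma_{ii}\ \ge\ \lambda_{\max}(M)^{-2}\sum_i\tilde\Sigma_{ii}=\lambda_{\max}(M)^{-2}\operatorname{trace}\Sigma .
\]
Since \(\operatorname{trace}\Sigma>0\), this gives \(m_\infty\ge\lambda_{\max}(M)^{-2}>1\).

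It remains to confirm \(1<\sup_n m_n<\infty\). The supremum is at least \(m_\infty>1\). It is finite because \(m_n\le \sup_k\mathbb E\|\sqrt k\,U_k\|^2/V\), and this is bounded by Assumption~\ref{assump:momnt-bound} via Lyapunov's inequality. Hence \(\mu\) is Type~I FSS.

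The step needing the most care is the first one: identifying \(V\) with \(\operatorname{trace}\Sigma\) (the mean-zero property of the tangent data) and \(V_n\) with \(\mathbb E\|U_n\|^2\). Once these identifications are in place, the spectral bound \(H\prec 2I\) does all the work.
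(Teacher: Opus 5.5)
Your proof is correct and follows the same route as the paper. Both use Lemma~\ref{lem:L2_from_CLT} for convergence of $n\,\mathbb E\|U_n\|^2$, the trace bound $\operatorname{trace}(\Sigma H^{-2})\ge\lambda_{\max}(H)^{-2}\operatorname{trace}\Sigma$, and Lemma~\ref{lem:spectral_bound_Hess} to get $\lambda_{\max}(H)<2$; your extra checks ($V=\operatorname{trace}\Sigma$ via $\mathbb E[\exp_N^{-1}X]=0$ from the first-order condition, and finiteness of $\sup_n m_n$) fill in steps the paper leaves implicit.
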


\begin{proof}
By the CLT in the statement, Assumption~\ref{assump:momnt-bound}, and Lemma~\ref{lem:L2_from_CLT},
\[
    n\,\mathbb E\|U_n\|^2
    \longrightarrow
    \operatorname{trace}\bigl(4H^{-1}\Sigma H^{-1}\bigr)
    =
    4\,\operatorname{trace}(\Sigma H^{-2}).
\]
Since \(H\) is positive definite, \(\lambda_{\max}(H)I_m\succeq H\), and therefore \(H^{-2}\succeq \lambda_{\max}(H)^{-2}I_m\).  As \(\Sigma\succeq0\), this gives
\[
    \operatorname{trace}(\Sigma H^{-2})
    \ge
    \frac{1}{\lambda_{\max}(H)^2}\operatorname{trace}(\Sigma).
\]
By Lemma~\ref{lem:spectral_bound_Hess}, applied to \(M=\frac12H\), we have \(\lambda_{\max}(H)<2\).  Hence the asymptotic modulation from Definition~\ref{def:asymptotic_modulation} satisfies
\[
    m_\infty
    =
    \frac{\operatorname{trace}\bigl(4H^{-1}\Sigma H^{-1}\bigr)}
    {\operatorname{trace}(\Sigma)}
    \ge
    \frac{4}{\lambda_{\max}(H)^2}>1.
\]
Thus the Fr\'echet mean is Type I finite sample smeary.
\end{proof}

\begin{remark}[\textbf{Special case of hemispherical support}]\label{rmk:hemi-implies-FSS}
    Hemispherical support provides a natural non-smeary regime on spheres, so that the previous theorem can be applied to deduce Type-I FSS; see \citet{Pal2026SharpSupportThresholds} for the corresponding support-threshold picture. 
\end{remark}

\begin{remark}[\textbf{Previous Type-I FSS results}]
\label{rem:convex_closure_density}
  The theorem generalizes Theorem 6 of \citet{EltznerFSSonspheres} in the case of densities: no rotational symmetry has been assumed here.
\end{remark}

\begin{remark}[\textbf{On the necessity of the moment condition}]
\label{rem:moment_condition_importance}
The moment bound in Assumption~\ref{assump:momnt-bound} is not merely technical.  By Lemma~\ref{lem:L2_from_CLT}, it ensures uniform integrability of \(\{\|\sqrt n\,U_n\|^2\}_{n\ge1}\), which is what allows one to upgrade the non-smeary CLT to
\[
    n\,\mathbb E\|U_n\|^2
    \longrightarrow
    \operatorname{trace}\bigl(4H^{-1}\Sigma H^{-1}\bigr).
\]
Convergence in distribution alone does not imply convergence of second moments. 
\end{remark}

The next remark shows that, even in the non-smeary rotationally symmetric regime, the limiting modulation can be made arbitrarily large, similar to the construction with a measure carrying a point mass in \citet[Theorem~4.3]{Eltzner2022GeometricalSmeariness}.

\begin{remark}[\textbf{Arbitrarily high modulation under rotational symmetry}]
\label{rem:arbitrary_modulation_fixed_m}
Fix \(m\ge2\), and let \(R_m\in(\pi/2,\pi)\) denote the unique zero of \(b_m(R)=1+(m-1)R\cot R\).  Let \(\varepsilon>0\).  Then, at the level of the rotationally symmetric Hessian formula, one can choose a smooth rotationally symmetric density \(\rho(\|\cdot\|)\) on \(\mathbb S^m\), with radial support contained in a small interval \([\widetilde R_m-\eta,\widetilde R_m+\eta]\subset(\pi/2,R_m)\), for some \(\widetilde R_m<R_m\) sufficiently close to \(R_m\), such that \(H=D^2\widetilde F_\mu(0)\) is positive definite and satisfies \(\lambda_{\max}(H)<\varepsilon\).

Indeed, by \eqref{eqn:Hess}, under rotational symmetry one has
\[
    H
    =
    \frac{2\operatorname{vol}(\mathbb S^{m-1})}{m}
    \left(
    \int_0^\pi
    b_m(R)\rho(R)(\sin R)^{m-1}\,dR
    \right)I_m.
\]
Since \(b_m(R)>0\) on \((0,R_m)\) and \(b_m(R)\to0\) as \(R\uparrow R_m\), choosing \(\rho\) supported in a sufficiently small neighborhood of \(\widetilde R_m<R_m\), with \(\widetilde R_m\uparrow R_m\), yields \(0<\lambda_{\max}(H)<\varepsilon\).  Consequently, in the rotationally symmetric scalar-Hessian case, whenever the corresponding limiting modulation is defined, \(m_\infty=4/\lambda_{\max}(H)^2\) can be made arbitrarily large.  
\end{remark}


\section{Curse of dimensionality for modulation of non-smeary random variables}
\label{scn:curse-dim-modulation}

This section shows that, for dimensionally comparable rotationally symmetric densities supported in a closed sub-hemispherical cap around the unique Fr\'echet mean, the asymptotic modulation increases with dimension.  The effect is purely second-order: under rotational symmetry, the Hessian is a scalar multiple of the identity, and the scalar Hessian eigenvalue decreases with dimension.

\begin{definition}[\textbf{Dimensionally comparable rotationally symmetric family}]
\label{def:dimensionally-comparable-distributions}
Let \(N\) denote the north pole.  Fix \(\varepsilon\in[0,\frac{\pi}{2})\) and set \(R_\varepsilon:=\frac{\pi}{2}-\varepsilon\).  For each \(m\ge2\), let \(\mu^{(m)}\) be a probability measure on \(\mathbb S^m\), rotationally symmetric about \(N\), with
\[
\operatorname{supp}\mu^{(m)}\subset \overline B(N;R_\varepsilon).
\]
We say that \(\{\mu^{(m)}\}_{m\ge2}\) is a \emph{dimensionally comparable rotationally symmetric family} if there exists a measurable function \(w:[0,R_\varepsilon]\to[0,\infty)\) such that, for every \(m\ge2\),
\[
0<Z_m:=\int_{0}^{R_\varepsilon} w(R)(\sin R)^{m-1}\,dR<\infty,
\]
and, in normal polar coordinates \(v=R\Theta\in T_N\mathbb S^m\), the density of \(\mu^{(m)}\) with respect to polar volume measure has the form
\[
\rho^{(m)}(R,\Theta)=\rho_m(R)\,\varphi_m(\Theta),
\qquad
\rho_m(R):=\frac{w(R)}{Z_m},
\]
where \(\varphi_m\) is the uniform probability density on \(\mathbb S^{m-1}\).  Equivalently, the induced radial probability measure on \([0,R_\varepsilon]\) is
\[
\nu_m(dR)=\frac{w(R)(\sin R)^{m-1}}{\int_{0}^{R_\varepsilon} w(U)(\sin U)^{m-1}\,dU}\,dR.
\]
Thus the family is generated by a single dimension-independent unnormalized radial weight \(w\), while the dependence on \(m\) enters only through the Jacobian factor \((\sin R)^{m-1}\) and the normalizing constant \(Z_m\).
\end{definition}

\begin{expl}[\textbf{Dimensionally comparable distributions}]
The following are dimensionally comparable rotationally symmetric families.

\begin{enumerate}
\item \textbf{Uniform law on the truncated geodesic ball.}
Take \(w(R)\equiv 1\).  Then
\[
\nu_m(dR)=\frac{(\sin R)^{m-1}}{\int_{0}^{R_\varepsilon}(\sin U)^{m-1}\,dU}\,dR.
\]

\item \textbf{Truncated von Mises--Fisher family with fixed concentration parameter \(\kappa\).}
Since the von Mises--Fisher density is proportional to \(e^{\kappa N^\top x}\) and \(N^\top x=\cos R\), take \(w(R)=e^{\kappa\cos R}\).  Then
\[
\nu_m(dR)=\frac{e^{\kappa\cos R}(\sin R)^{m-1}}{\int_{0}^{R_\varepsilon} e^{\kappa\cos U}(\sin U)^{m-1}\,dU}\,dR.
\]
See \citet{JuppRegoliAzzalini2016}.  By rotational symmetry, \(N\) is a critical point of the Fr\'echet function.

\item \textbf{Truncated Watson family with fixed concentration parameter \(\kappa\).}
Since the Watson density is proportional to \(e^{\kappa(N^\top x)^2}\) and \((N^\top x)^2=\cos^2 R\), take \(w(R)=e^{\kappa\cos^2 R}\).  Then
\[
\nu_m(dR)=\frac{e^{\kappa\cos^2 R}(\sin R)^{m-1}}{\int_{0}^{R_\varepsilon} e^{\kappa\cos^2 U}(\sin U)^{m-1}\,dU}\,dR.
\]
See \citet{Watson1965,Mardia1975}.

\item \textbf{General truncated zonal family with fixed profile.}
More generally, if \(g:[-1,1]\to[0,\infty)\) is measurable, then \(w(R)=g(\cos R)\) defines a dimensionally comparable rotationally symmetric family.  See \citet{JuppRegoliAzzalini2016}.
\end{enumerate}
\end{expl}

The following theorem proves that, for the families in Definition~\ref{def:dimensionally-comparable-distributions}, the asymptotic modulation is strictly increasing with dimension.

\begin{theorem}[\textbf{Dimension monotonicity of asymptotic modulation for dimensionally comparable rotationally symmetric families}]
\label{thm:dim_monotone_modulation_rot}
Fix \(\varepsilon\in[0,\frac{\pi}{2})\) and set \(R_\varepsilon:=\frac{\pi}{2}-\varepsilon\).  For each integer \(m\ge2\), let \(\mu^{(m)}\) be an absolutely continuous probability measure on \(\mathbb S^m\) with unique intrinsic Fr\'echet mean \(N\), and assume that \(\{\mu^{(m)}\}_{m\ge2}\) is a dimensionally comparable rotationally symmetric family about \(N\), supported in \(\overline B(N;R_\varepsilon)\), in the sense of Definition~\ref{def:dimensionally-comparable-distributions}.  Assume further that the moment condition \ref{assump:momnt-bound} holds for each \(m\).  Let \(X^{(m)}\sim\mu^{(m)}\), set \(\Sigma_m:=\operatorname{Cov}[\exp_N^{-1}(X^{(m)})]\), and let \(H_m:=D^2\widetilde F_m(0)\).  Define the dimension-indexed asymptotic modulations, cf. Definition~\ref{def:asymptotic_modulation}, by
\[
m_\infty^{(m)}
:=
\frac{\operatorname{trace}\!\bigl(4H_m^{-1}\Sigma_m H_m^{-1}\bigr)}{\operatorname{trace}(\Sigma_m)}.
\]
Then:
\begin{enumerate}
\item The sequence \(\bigl(m_\infty^{(m)}\bigr)_{m\ge2}\) is strictly increasing with dimension:
\[
m_\infty^{(m+1)} > m_\infty^{(m)}
\qquad (m\ge2).
\]

\item If the common radial weight \(w\) from Definition~\ref{def:dimensionally-comparable-distributions} satisfies, for every \(\delta>0\),
\[
\int_{R_\varepsilon-\delta}^{R_\varepsilon} w(R)\,dR > 0,
\]
then, in the extended real line,
\[
\lim_{m\to\infty} m_\infty^{(m)}
=
\frac{1}{\bigl(R_\varepsilon \cot R_\varepsilon\bigr)^2}
=
\frac{1}{\Bigl(\left(\frac{\pi}{2}-\varepsilon\right)\cot\!\left(\frac{\pi}{2}-\varepsilon\right)\Bigr)^2}.
\]
\end{enumerate}
\end{theorem}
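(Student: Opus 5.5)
Under rotational symmetry the modulation reduces to a scalar, so the plan is to compute that scalar explicitly and then study its dependence on \(m\).

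\textbf{Step 1: reduce \(m_\infty^{(m)}\) to a scalar.} By Proposition~\ref{prop:Hessian-common}, \(H_m=h_m I_m\) with
\[
h_m=\frac{2\operatorname{vol}(\mathbb S^{m-1})}{m}\int_0^{R_\varepsilon}\rho_m(R)(\sin R)^{m-1}b_m(R)\,dR .
\]
Here \(\operatorname{vol}(\mathbb S^{m-1})\int_0^{R_\varepsilon}\rho_m(R)(\sin R)^{m-1}(\cdot)\,dR\) is just the expectation under the radial law \(\nu_m\) (after normalizing the uniform angular density; I will fix the constants so that \(\nu_m\) is a probability measure). Writing \(q(R):=R\cot R\), this gives
\[
h_m=\frac{2}{m}\,\mathbb E_{\nu_m}\bigl[1+(m-1)q(R)\bigr]
=2\Bigl(\tfrac1m+\tfrac{m-1}{m}\,\mathbb E_{\nu_m}[q]\Bigr).
\]
Since \(H_m\) is scalar, \(m_\infty^{(m)}=4/h_m^2\) for any \(\Sigma_m\neq0\). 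It is positive because \(q>0\) on \((0,\pi/2)\), and \(q(\pi/2)=0\) occurs on a null set. Monotonicity of \(m_\infty^{(m)}\) is therefore equivalent to strict decrease of
\[
s_m:=\tfrac1m+\bigl(1-\tfrac1m\bigr)Q_m,\qquad Q_m:=\mathbb E_{\nu_m}[q].
\]

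\textbf{Step 2: strict decrease of \(s_m\).} We have \(s_m=Q_m+(1-Q_m)/m\). Because \(q\le1\), the second term is nonnegative and strictly decreasing in \(m\) whenever \(Q_m<1\), which holds since \(q<1\) a.e. It then suffices to show \(Q_{m+1}\le Q_m\), and in fact we get strictness directly. Observe that \(\nu_{m+1}\) is \(\nu_m\) tilted by the density proportional to \(\sin R\), which is strictly increasing on \([0,R_\varepsilon]\subset[0,\pi/2]\). Meanwhile \(q\) is strictly decreasing by the proof of Proposition~\ref{prop:zero_bm}. By the Chebyshev (covariance) inequality,
\[
\mathbb E_{\nu_m}[q\sin]\le \mathbb E_{\nu_m}[q]\,\mathbb E_{\nu_m}[\sin],
\]
with strict inequality unless \(\nu_m\) is a point mass, which is excluded by absolute continuity. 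Hence \(Q_{m+1}<Q_m\). Combining this with \((1-Q_{m+1})/(m+1)<(1-Q_m)/m\) requires care, because \(1-Q_{m+1}>1-Q_m\). I will instead write
\[
s_m-s_{m+1}=(Q_m-Q_{m+1})\bigl(1-\tfrac1{m+1}\bigr)+(1-Q_m)\Bigl(\tfrac1m-\tfrac1{m+1}\Bigr),
\]
and both terms are positive. This gives part (1).

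\textbf{Step 3: the limit.} The hypothesis puts positive \(w\)-mass near \(R_\varepsilon\). The factor \((\sin R)^{m-1}\) grows fastest at \(R_\varepsilon\) because \(\sin\) is increasing there, so \(\nu_m\) concentrates at \(R_\varepsilon\). Concretely, for \(\delta>0\), compare the mass of \([0,R_\varepsilon-\delta]\), which is at most \(\|w\|_1\sin(R_\varepsilon-\delta)^{m-1}\), with the mass of \([R_\varepsilon-\delta/2,R_\varepsilon]\), which is at least \(c_\delta\sin(R_\varepsilon-\delta/2)^{m-1}\). The ratio tends to \(0\), so \(\nu_m\Rightarrow\delta_{R_\varepsilon}\). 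If \(w\) is not integrable, I would truncate, or use the local integrability implicit in \(Z_m<\infty\).

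Since \(q\) is bounded and continuous on \([0,R_\varepsilon]\), \(Q_m\to q(R_\varepsilon)\), hence \(s_m\to q(R_\varepsilon)\) and
\[
m_\infty^{(m)}=\frac{1}{s_m^2}\to \frac{1}{(R_\varepsilon\cot R_\varepsilon)^2}.
\]
When \(\varepsilon=0\) this is \(+\infty\), consistent with \(q(\pi/2)=0\).

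The main obstacle I expect is the weak-concentration argument in Step 3 for a general measurable \(w\); the strict Chebyshev step in Step 2 is comparatively routine.
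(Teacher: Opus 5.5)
Your proposal is correct and follows the paper's proof essentially step for step. It uses the same scalar reduction $m_\infty^{(m)}=4/h_m^2$, the same strict Chebyshev/covariance argument for the $\sin R$-tilt $\nu_m\mapsto\nu_{m+1}$ (your decomposition of $s_m-s_{m+1}$ is a rearrangement of the paper's criterion, which in your notation reads $m^2(Q_{m+1}-Q_m)<1-Q_m$), and the same mass comparison giving $\nu_m\Rightarrow\delta_{R_\varepsilon}$. The one loose end is your bound via $\|w\|_1$, which need not be finite; the clean fix, used in the paper, is $\int_0^{R_\varepsilon-\delta}w(R)(\sin R)^{m-1}\,dR\le \sin(R_\varepsilon-\delta)^{m-2}Z_2$, which needs only $Z_2<\infty$.
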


\begin{proof}
\textbf{Proof of (1).}
By Definition~\ref{def:dimensionally-comparable-distributions}, there exists a measurable function \(w:[0,R_\varepsilon]\to[0,\infty)\) such that \(Z_m:=\int_0^{R_\varepsilon}w(R)(\sin R)^{m-1}\,dR\in(0,\infty)\), \(\rho_m(R)=w(R)/Z_m\), and \(\rho^{(m)}(R,\Theta)=\rho_m(R)\varphi_m(\Theta)\), where \(\varphi_m\) is the uniform probability density on \(\mathbb S^{m-1}\).  Rotational symmetry gives
\[
\int_{\mathbb S^{m-1}}\Theta\Theta^\top\,\varphi_m(\Theta)\,d\Theta=\frac{1}{m}I_m.
\]
Using the Hessian formula \eqref{eqn:Hessian-general-density} with this normalized angular density, equivalently the rotationally symmetric reduction \eqref{eqn:Hess} with the angular volume factor cancelled by \(\varphi_m\), we obtain
\[
H_m
=
\frac{2}{m}\left(\int_0^{R_\varepsilon}\rho_m(R)(\sin R)^{m-1}b_m(R)\,dR\right)I_m.
\]
Since \(b_m(R)=1+(m-1)R\cot R\), this becomes \(H_m=\lambda_m I_m\), where
\[
s_m:=
\int_0^{R_\varepsilon}(R\cot R)\,\rho_m(R)(\sin R)^{m-1}\,dR
=
\frac{\int_0^{R_\varepsilon}(R\cot R)\,w(R)(\sin R)^{m-1}\,dR}{Z_m},
\qquad
\lambda_m:=2\left(s_m+\frac{1-s_m}{m}\right).
\]
Since \(0\le R\cot R<1\) on \((0,\pi/2]\) and \(Z_m\in(0,\infty)\), we have \(0\le s_m<1\).  Hence \(\lambda_m>0\), so \(H_m\) is positive definite.  The trace formula in Definition~\ref{def:asymptotic_modulation} therefore gives
\[
m_\infty^{(m)}
=
\frac{\operatorname{trace}\!\bigl((4/\lambda_m^2)\Sigma_m\bigr)}{\operatorname{trace}(\Sigma_m)}
=
\frac{4}{\lambda_m^2}.
\]
Thus \(m_\infty^{(m+1)} > m_\infty^{(m)}\) is equivalent to \(\lambda_{m+1}<\lambda_m\).

We next prove that \(s_{m+1}<s_m\).  Define the probability measure \(\nu_m\) on \([0,R_\varepsilon]\) by
\[
\nu_m(dR):=\frac{w(R)(\sin R)^{m-1}}{Z_m}\,dR.
\]
Let \(c_m:=\int_0^{R_\varepsilon}\sin R\,\nu_m(dR)=Z_{m+1}/Z_m>0\).  Then
\[
\frac{\sin R}{c_m}\,\nu_m(dR)=\nu_{m+1}(dR).
\]
Therefore
\[
s_{m+1}
=
\frac{1}{c_m}\int_0^{R_\varepsilon}(R\cot R)\sin R\,\nu_m(dR).
\]
Subtracting \(s_m=\int_0^{R_\varepsilon}(R\cot R)\nu_m(dR)\), and using the standard covariance symmetrization identity, gives
\[
s_{m+1}-s_m
=
\frac{1}{2c_m}\int_0^{R_\varepsilon}\int_0^{R_\varepsilon}
\bigl(R\cot R-U\cot U\bigr)(\sin R-\sin U)\,\nu_m(dR)\,\nu_m(dU).
\]
The map \(R\mapsto R\cot R\) is strictly decreasing on \((0,\pi/2]\), while \(R\mapsto\sin R\) is strictly increasing on \([0,\pi/2]\).  Hence the integrand is everywhere nonpositive.  Since \(\nu_m\) is absolutely continuous with respect to Lebesgue measure and is not supported on a single point, the inequality is strict.  Thus \(s_{m+1}<s_m\).

Finally,
\[
\lambda_{m+1}<\lambda_m
\;\Longleftrightarrow\;
m^2(s_{m+1}-s_m)<1-s_m.
\]
Since \(s_{m+1}-s_m<0\) and \(s_m<1\), this inequality holds.  Therefore \(\lambda_{m+1}<\lambda_m\), and hence \(m_\infty^{(m+1)}>m_\infty^{(m)}\).

\textbf{Proof of (2).}
Assume that for every \(\delta>0\), \(\int_{R_\varepsilon-\delta}^{R_\varepsilon}w(R)\,dR>0\).  Fix \(\delta\in(0,R_\varepsilon)\) and write \(q_\delta:=\sin(R_\varepsilon-\delta)\) and \(\beta_\delta:=\int_{R_\varepsilon-\delta/2}^{R_\varepsilon}w(R)\,dR>0\).  Then
\[
\int_0^{R_\varepsilon-\delta}w(R)(\sin R)^{m-1}\,dR
\le
q_\delta^{\,m-2}\int_0^{R_\varepsilon-\delta}w(R)\sin R\,dR,
\]
while
\[
Z_m\ge \beta_\delta\,\sin(R_\varepsilon-\delta/2)^{m-1}.
\]
Since \(q_\delta<\sin(R_\varepsilon-\delta/2)\), it follows that
\[
\nu_m([0,R_\varepsilon-\delta])\to0,
\qquad
\nu_m([R_\varepsilon-\delta,R_\varepsilon])\to1.
\]
As \(R\mapsto R\cot R\) is continuous on \([0,R_\varepsilon]\), we obtain
\[
s_m=\int_0^{R_\varepsilon}(R\cot R)\,\nu_m(dR)
\longrightarrow
R_\varepsilon\cot R_\varepsilon.
\]
Therefore \(\lambda_m=2(s_m+(1-s_m)/m)\to 2R_\varepsilon\cot R_\varepsilon\), and consequently
\[
\lim_{m\to\infty}m_\infty^{(m)}
=
\lim_{m\to\infty}\frac{4}{\lambda_m^2}
=
\frac{1}{\bigl(R_\varepsilon\cot R_\varepsilon\bigr)^2}.
\]
\end{proof}

\subsection{Numerical simulation of curse of dimensionality for modulation}
\label{subsec:numerical-curse-dim-modulation}

We carried out a numerical experiment for the uniform distribution supported on a sub-hemispherical cap in \(\mathbb S^m\).  More precisely, for each \(m\in\{2,3,5,10,50,100,200\}\), we considered the uniform distribution on the geodesic cap \(B(N;R_\varepsilon)\), with \(R_\varepsilon=\frac{\pi}{2}-10^{-4}\), centered at the north pole \(N\).  For each sample size \(n\in\{1000,2000,3000\}\), we generated independent samples \(X_1,\dots,X_n\) from this cap-uniform distribution, computed the sample intrinsic Fr\'echet mean \(\hat\mu_n\) by Riemannian gradient descent on \(\mathbb S^m\), and then evaluated the finite-sample quantity
\[
Z_n^{(m)}:=\frac{n\,d^2(\hat\mu_n,N)}{V_m},
\qquad
V_m:=\mathbb E\bigl[d^2(X,N)\bigr].
\]
For each pair \((n,m)\), the experiment was repeated three times and we recorded the empirical mean of \(Z_n^{(m)}\), shown in Figure~\ref{fig:finite-sample-modulation-vs-dimension}.

\begin{figure}[t]
\centering
\includegraphics[width=0.72\textwidth]{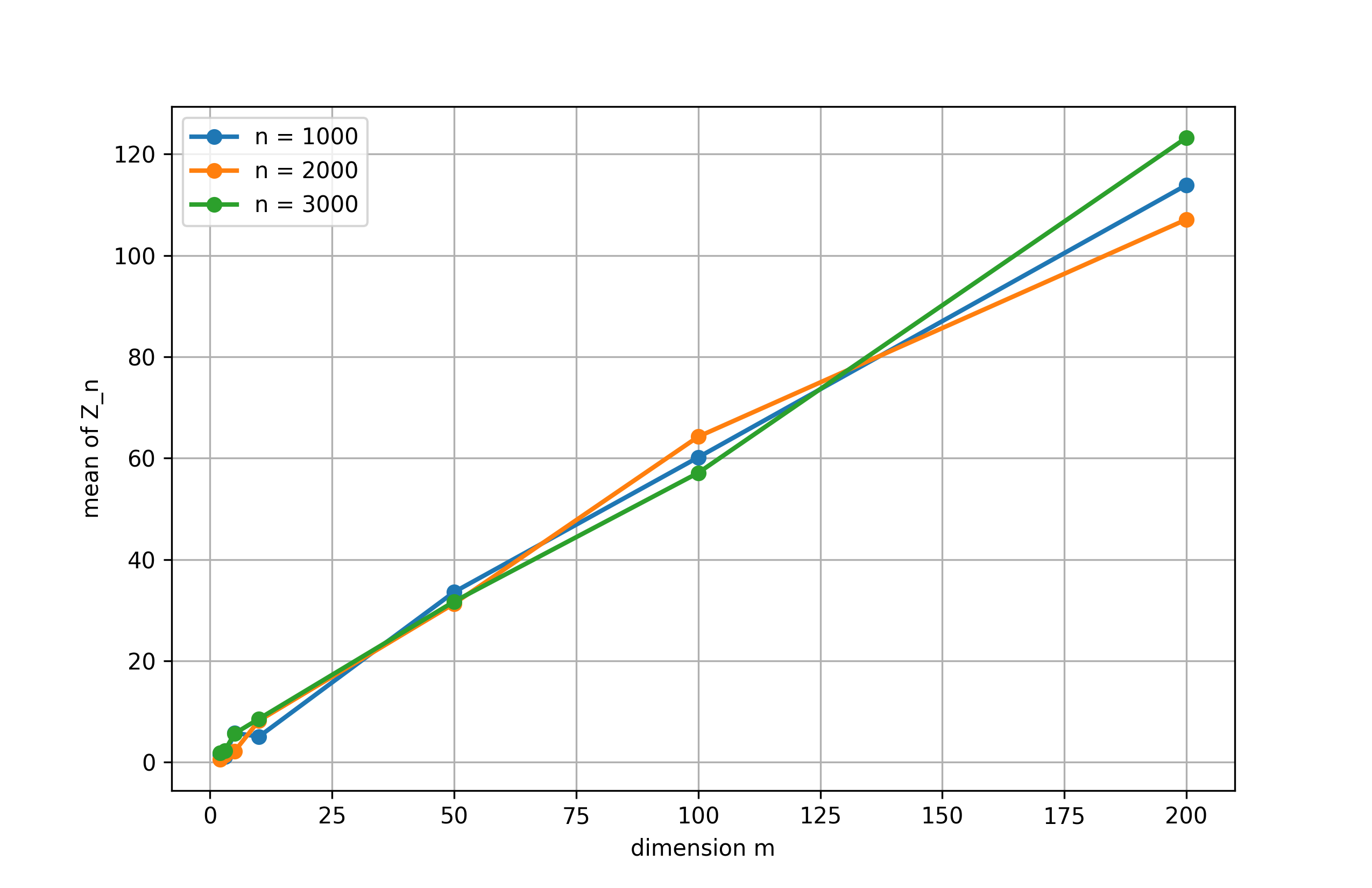}
\caption{\textbf{Finite-sample modulation versus dimension.}}
\label{fig:finite-sample-modulation-vs-dimension}
\end{figure}

\section{Application to Geometric Statistics: Hessian Estimation and modulation-aware tests}\label{scn:modulation-aware-tests}

Throughout this section, assume that the random variable $X$, or equivalently its induced measure $\mu$, has a unique population Fr\'echet mean $\bar\mu$. We shall use one of the following two support assumptions. In the general case, assume that there exists $R_\ast<\pi/2$ such that
\begin{assumption}\label{assum:bounded-supp-general}
      $\operatorname{supp}(X)\subset \overline{B}(\bar{\mu};R_\ast)$.
\end{assumption}
In the rotationally symmetric case, we allow the weaker condition that there exists $R_\ast<R_m$, where $R_m$ is the first zero of $b_m$ (cf. Equation~\ref{eqn:b}), such that
\begin{assumption}\label{assum:bounded-supp-rot}
      $\operatorname{supp}(X)\subset \overline{B}(\bar{\mu};R_\ast)$
      and $\mu$ is \textit{rotationally symmetric} about $\bar\mu$.
\end{assumption}

\nd Under Assumption~\ref{assum:bounded-supp-general}, the Hessian of the Fr\'echet function at $\bar\mu$ is positive definite by Theorem 6.1 of \citet{Pal2026SharpSupportThresholds}. Under Assumption~\ref{assum:bounded-supp-rot}, this also follows from Theorem 5.1 of the same paper. Let \(H=D^{2}\widetilde F(0)\) denote the Hessian of the \textit{lifted} Fr\'echet function in normal coordinates at $\bar\mu$.

Recall from Section~\ref{scn:set-up-general-density} that, when $\mu$ has density $\rho(R,\Theta)$ in the polar coordinates around $\bar\mu$, the general Hessian formula is
\[
H
=
2\int_{0}^{R_\ast}\int_{\mathbb S^{m-1}}
\left[
\frac{R}{\tan R}I_m+
\left(1-\frac{R}{\tan R}\right)\Theta\Theta^T
\right]
\rho(R,\Theta)(\sin R)^{m-1}\,d\Theta\,dR.
\]
In the rotationally symmetric case this reduces to (cf. Equation~\eqref{eqn:Hess})
\[
H
=
\frac{2 \operatorname{vol}(\mathbb{S}^{m-1})}{m} 
\left(
\int_0^{R_\ast}\rho(R)(\sin R)^{m-1}b_m(R)\,dR
\right)I_m.
\]

\subsection{Plug-in Hessian estimators in the general and rotationally symmetric cases}

\noindent
Let $X_1,\dots,X_n$ be i.i.d.\ copies of $X$, and let $\hat\mu_n$ be a measurable selection of sample Fréchet means. Define the \textit{plug-in tangent vectors} and \textit{radii}
\[
V_{i,n}:=\log_{\hat\mu_n}(X_i),
\qquad
R_{i,n}:=\|V_{i,n}\|=d(\hat\mu_n,X_i).
\]
For $R_{i,n}>0$, set $\Theta_{i,n}:=V_{i,n}/R_{i,n}$, with all tangent vectors represented in a smooth local orthonormal frame. In the general case, the plug-in Hessian estimator is obtained by replacing the population integral in the general Hessian formula, Equation~\eqref{eqn:Hessian-general-density}, by the empirical average:
\begin{equation}\label{eqn:Hess-est-gen}
\boxed{    H_n^{\mathrm{gen}}
:=
\frac{2}{n}\sum_{i=1}^n
\left[
\frac{R_{i,n}}{\tan R_{i,n}}I_m+
\left(1-\frac{R_{i,n}}{\tan R_{i,n}}\right)
\Theta_{i,n}\Theta_{i,n}^{T}
\right],}
\end{equation}

with the \textit{convention} that the $i$th summand is $I_m$ when $R_{i,n}=0$.

In the rotationally symmetric case, one may instead use the simpler radial plug-in estimator, obtained from Equation~\eqref{eqn:Hess},
\begin{equation}\label{eqn:Hess-est-rot-symm}
\boxed{    H_n^{\mathrm{rot}}
:=
\frac{2}{m}
\left(
\frac1n\sum_{i=1}^n b_m(R_{i,n})
\right)I_m.}
\end{equation}

Below, $H_n$ denotes $H_n^{\mathrm{gen}}$ (Equation \eqref{eqn:Hess-est-gen}) in the general case and $H_n^{\mathrm{rot}}$ in the rotationally symmetric case (Equation \eqref{eqn:Hess-est-rot-symm}) .

In the remainder of this section, \(H_n\) denotes \(H_n^{\mathrm{gen}}\) under Assumption~\ref{assum:bounded-supp-general}, and \(H_n^{\mathrm{rot}}\) under Assumption~\ref{assum:bounded-supp-rot}. The next result shows that $H_n$ is indeed a consistent estimator of $H,$ the Hessian of the Fréchet function.

\begin{proposition}[\textbf{Consistency of the plug-in Hessian estimators}]
\label{prop:Hn_consistency}
Under the corresponding support assumption,
\[
H_n\to H \quad \text{in probability}.
\]
\end{proposition}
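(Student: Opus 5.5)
The plan is to split \(H_n-H\) into a plug-in error and a sampling error. We compare \(H_n\) with the oracle estimator \(\widetilde H_n\), defined by the same formula but with \(\hat\mu_n\) replaced by \(\bar\mu\). That is, we use \(V_i:=\log_{\bar\mu}(X_i)\), \(R_i=\|V_i\|\), \(\Theta_i=V_i/R_i\), and the summand
\[
K(v):=\frac{\|v\|}{\tan\|v\|}I_m+\Bigl(1-\frac{\|v\|}{\tan\|v\|}\Bigr)\frac{vv^{\mathsf T}}{\|v\|^2}.
\]
By \eqref{eqn:Hess-g}, \(K(v)=\tfrac12 D_u^2 g(0;v)\). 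Hence \(K\) extends smoothly to \(v=0\) with \(K(0)=I_m\), which is the stated convention. It is bounded and smooth on \(\{\|v\|\le R\}\) for any \(R<\pi\). The key observation is that \(\widetilde H_n=\frac2n\sum_i K(V_i)\) is an i.i.d.\ average of bounded random matrices. Its mean is exactly \(H\) by Proposition~\ref{prop:Hessian-common}. The weak law of large numbers then gives \(\widetilde H_n\to H\) almost surely, entrywise. In the rotational case, \(b_m\) is continuous and bounded on \([0,R_\ast]\) because \(R_\ast<R_m<\pi\). The same argument applies to \(\frac2m\frac1n\sum_i b_m(R_i)\), using \eqref{eqn:Hess} with the angular factor absorbed into the polar density.

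Next we use consistency of the sample Fréchet mean. Since \(\bar\mu\) is the unique population mean and \(\mathbb S^m\) is compact, the Ziezold/Bhattacharya–Patrangenaru strong consistency gives \(\hat\mu_n\to\bar\mu\) almost surely. We may also invoke the CLT setting, where \(\hat\mu_n=\bar\mu+O_P(n^{-1/2})\).

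The main obstacle is controlling \(H_n-\widetilde H_n\). Two issues arise. First, the vectors \(V_{i,n}\) live in \(T_{\hat\mu_n}\mathbb S^m\), not in \(T_{\bar\mu}\mathbb S^m\). We handle this with the smooth local orthonormal frame. Near \(\bar\mu\), the map \((p,x)\mapsto \log_p(x)\) expressed in frame coordinates is jointly smooth on the compact set \(\{d(p,\bar\mu)\le\eta\}\times\overline B(\bar\mu;R_\ast)\), because \(R_\ast+\eta<\pi\) keeps \(x\) away from the cut locus of \(p\). The frame at \(\bar\mu\) is the fixed identification with \(\mathbb R^m\). Composing with \(K\) gives a function \(\Phi(p,x):=K(\log_p x)\), in frame coordinates, that is uniformly continuous on this compact set. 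Continuity at \(v=0\) is guaranteed by the smooth-extension remark above. On the event \(\{d(\hat\mu_n,\bar\mu)\le\eta\}\), whose probability tends to \(1\), we then have
\[
\|H_n-\widetilde H_n\|\le 2\sup_{x}\|\Phi(\hat\mu_n,x)-\Phi(\bar\mu,x)\|=:2\omega(d(\hat\mu_n,\bar\mu)),
\]
where \(\omega(t)\to0\) as \(t\to0\) by uniform continuity. Hence \(H_n-\widetilde H_n\to0\) in probability.

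Second, the result must hold up to the frame choice. \(H\) is a tensor at \(\bar\mu\) and \(H_n\) a tensor at \(\hat\mu_n\). The frame makes both \(m\times m\) matrices, and the frame's continuity is exactly what is used in \(\Phi\). Finally, combining the two steps gives \(H_n\to H\) in probability. The rotational estimator is handled identically, using the uniform continuity of \((p,x)\mapsto b_m(d(p,x))\) on the same compact set. The one check needed here is that \(R_\ast+\eta<R_m<\pi\) for small \(\eta\), which holds since \(R_\ast<R_m\).
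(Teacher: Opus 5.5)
Your proposal is correct and follows essentially the same route as the paper. You compare $H_n$ with the oracle average $\widetilde H_n=\frac1n\sum_i\Psi(\bar\mu,X_i)$, apply the law of large numbers to it, and control $H_n-\widetilde H_n$ on $\{d(\hat\mu_n,\bar\mu)\le\varepsilon\}$ through joint regularity of $(p,x)\mapsto K(\log_p x)$ on a compact set away from the cut locus, using consistency of $\hat\mu_n$. The differences are cosmetic: you use a modulus of uniform continuity where the paper uses a Lipschitz bound $C\,d(\hat\mu_n,\bar\mu)$, and you explicitly justify the smooth extension $K(0)=I_m$ that the paper imposes as a convention (note also that almost-sure convergence of $\widetilde H_n$ comes from the strong, not the weak, law, though convergence in probability suffices).
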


\begin{proof}
We prove the general case, where \(H_n=H_n^{\mathrm{gen}}\). The rotationally symmetric case follows from the same argument applied to the scalar radial formula involving \(b_m\).

By the consistency results in \citet{Ziezold1977} for Fréchet sample mean sets, together with uniqueness of the population Fréchet mean \(\bar\mu\), any measurable selection \(\hat\mu_n\) of sample Fréchet means satisfies \(\hat\mu_n\to\bar\mu\) in probability.

Choose \(\varepsilon>0\) such that \(R_\ast+\varepsilon<\pi/2\), and set
\(
A_{n,\varepsilon}:=\{d(\hat\mu_n,\bar\mu)\le \varepsilon\}.
\)
On \(A_{n,\varepsilon}\), for every \(x\in\operatorname{supp}(X)\),
\[
d(\hat\mu_n,x)\le d(\hat\mu_n,\bar\mu)+d(\bar\mu,x)
\le \varepsilon+R_\ast<\pi/2.
\]
Hence the logarithm map and the pointwise Hessian of squared distance are smooth on the compact set
\(
\overline B(\bar\mu;\varepsilon)\times \operatorname{supp}(X).
\)

Let \(\Psi(p,x)\) denote, in the chosen smooth local orthonormal frame, the pointwise Hessian contribution
\[
\Psi(p,x)
=
2\left[
\frac{r}{\tan r}I_m+
\left(1-\frac{r}{\tan r}\right)\theta\theta^T
\right],
\qquad
r=d(p,x),\quad \theta=\frac{\log_p(x)}{r},
\]
with the convention \(\Psi(p,p)=2I_m\). By smoothness on the compact set above, there exists \(C<\infty\) such that, on \(A_{n,\varepsilon}\),
\[
\|\Psi(\hat\mu_n,X_i)-\Psi(\bar\mu,X_i)\|
\le C\,d(\hat\mu_n,\bar\mu)
\]
for every \(i=1,\dots,n\). Define
\[
\widetilde H_n:=\frac1n\sum_{i=1}^n \Psi(\bar\mu,X_i).
\]
By the weak law of large numbers applied entrywise,
\[
\widetilde H_n\to \int_{\mathbb S^m}\Psi(\bar\mu,x)\,d\mu(x)=H
\qquad\text{in probability}.
\]
Moreover, on \(A_{n,\varepsilon}\),
\[
\|H_n-\widetilde H_n\|
=
\left\|
\frac1n\sum_{i=1}^n
\bigl(\Psi(\hat\mu_n,X_i)-\Psi(\bar\mu,X_i)\bigr)
\right\|
\le C\,d(\hat\mu_n,\bar\mu).
\]
Since \(\mathbb P(A_{n,\varepsilon})\to1\) and \(d(\hat\mu_n,\bar\mu)\to0\) in probability, it follows that \(H_n-\widetilde H_n\to0\) in probability. Combining this with \(\widetilde H_n\to H\) proves \(H_n\to H\) in probability.

For \(H_n^{\mathrm{rot}}\), the same proof applies with the matrix-valued map \(\Psi\) replaced by the scalar Lipschitz function \(r\mapsto b_m(r)\) on \([0,R_\ast+\varepsilon]\subset[0,R_m)\). This proves the rotationally symmetric case as well.
\end{proof}

\subsection{A Hessian-corrected quadratic form}

\noindent
Let
\[
\bar U_n:=\log_{\bar{\mu}}(\hat{\mu}_n),
\qquad
\Sigma:=\operatorname{Cov}(\log_{\bar{\mu}}(X)),
\qquad
\Gamma:=4H^{-1}\Sigma H^{-1}.
\]
Let $\Sigma_n$ be a symmetric estimator of $\Sigma$ such that $\Sigma_n\to\Sigma$ in probability.

\begin{proposition}[\textbf{Quadratic form limit}]
\label{prop:quadratic_chisq}
Assume that $H$ is positive definite, and assume that the non-smeary central limit theorem holds:
\[
\sqrt n\,\bar U_n \rightarrow_{\mathcal D}\mathcal N(0,\Gamma).
\]
Assume moreover that $\Sigma$ is positive definite and that $\Sigma_n\to\Sigma$ in probability. Define
\[
E_n:=\{H_n \text{ is invertible and } \Sigma_n \text{ is positive definite}\}.
\]
Set
\begin{equation}\label{eqn:Gamma-n,T-n}
\widehat\Gamma_n:=
\begin{cases}
4H_n^{-1}\Sigma_n H_n^{-1}, & \text{on }E_n,\\
I_m, & \text{on }E_n^c,
\end{cases}
\qquad
T_n:=
\begin{cases}
n\,\bar U_n^{T}\widehat\Gamma_n^{-1}\bar U_n, & \text{on }E_n,\\
0, & \text{on }E_n^c.
\end{cases}
\end{equation}
Then
\[
T_n\rightarrow_{\mathcal D}\chi_m^2.
\]
\end{proposition}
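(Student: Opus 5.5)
The plan is the standard Slutsky plus continuous-mapping argument. The first step is to show that \(\widehat\Gamma_n\to\Gamma\) in probability and that \(\mathbb P(E_n)\to1\).

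By Proposition~\ref{prop:Hn_consistency}, \(H_n\to H\) in probability, and by hypothesis \(\Sigma_n\to\Sigma\) in probability. Since \(H\) is positive definite, the set of invertible matrices is an open neighbourhood of \(H\). Since \(\Sigma\) is positive definite, the set of symmetric positive definite matrices is an open neighbourhood of \(\Sigma\), because eigenvalues depend continuously on the matrix. Convergence in probability therefore gives \(\mathbb P(E_n)\to1\).

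On the open set where \(A\) is invertible and \(B\) is positive definite, the map \((A,B)\mapsto 4A^{-1}BA^{-1}\) is continuous. The continuous mapping theorem thus yields \(4H_n^{-1}\Sigma_nH_n^{-1}\to\Gamma\) in probability. Because \(\mathbb P(E_n^c)\to0\), the modification of \(\widehat\Gamma_n\) on \(E_n^c\) does not affect this, so \(\widehat\Gamma_n\to\Gamma\) in probability.

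The next step is to check that \(\Gamma\) is positive definite: it is a congruence \(4H^{-1}\Sigma H^{-1}\) of the positive definite \(\Sigma\) by the invertible symmetric \(H^{-1}\). Inversion is continuous at \(\Gamma\), so \(\widehat\Gamma_n^{-1}\to\Gamma^{-1}\) in probability. Here \(\widehat\Gamma_n\) is invertible on \(E_n\), and on \(E_n^c\) it equals \(I_m\), which is also invertible.

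Set \(Z_n:=\sqrt n\,\bar U_n\), so that \(Z_n\to_{\mathcal D}Z\sim\mathcal N(0,\Gamma)\). By Slutsky's lemma, \((Z_n,\widehat\Gamma_n^{-1})\to_{\mathcal D}(Z,\Gamma^{-1})\) jointly, since the second component converges in probability to a constant. The map \((z,A)\mapsto z^TAz\) is continuous, so \(Z_n^T\widehat\Gamma_n^{-1}Z_n\to_{\mathcal D}Z^T\Gamma^{-1}Z\).

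Writing \(Z=\Gamma^{1/2}G\) with \(G\sim\mathcal N(0,I_m)\) gives \(Z^T\Gamma^{-1}Z=\|G\|^2\sim\chi^2_m\).

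Finally, \(T_n\) and \(Z_n^T\widehat\Gamma_n^{-1}Z_n\) agree on \(E_n\). Hence for any \(\eta>0\),
\[
\mathbb P\bigl(|T_n-Z_n^T\widehat\Gamma_n^{-1}Z_n|>\eta\bigr)\le\mathbb P(E_n^c)\to0.
\]
So the difference tends to zero in probability, and a further application of Slutsky's lemma gives \(T_n\to_{\mathcal D}\chi^2_m\).

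The only delicate point is the bookkeeping for the exceptional set \(E_n\) and the positivity of \(\Gamma\). Everything else is routine, and the substantive input, consistency of \(H_n\), is supplied by Proposition~\ref{prop:Hn_consistency}. If one worries that Proposition~\ref{prop:Hn_consistency} needs one of the support assumptions, note that this proposition is stated in that same setting, so the assumption carries over.
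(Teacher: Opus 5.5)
Your proof is correct and follows the paper's argument: consistency of \(H_n\) and \(\Sigma_n\) gives \(\mathbb P(E_n)\to1\) and \(\widehat\Gamma_n\to\Gamma\), then Slutsky and continuous mapping give the \(\chi^2_m\) limit, and the modification on \(E_n^c\) is negligible because \(\mathbb P(E_n^c)\to0\). The only difference is cosmetic. You apply continuous mapping to \((z,A)\mapsto z^TAz\) jointly with \(\widehat\Gamma_n^{-1}\), whereas the paper passes through \(\widehat\Gamma_n^{-1/2}\) and the squared norm, so your version avoids the matrix square root.
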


\begin{proof}
By Proposition~\ref{prop:Hn_consistency}, $H_n\to H$ in probability. Since $H$ is positive definite and therefore invertible, so
\[
\mathbb P(H_n \text{ is invertible})\to 1.
\]
Similarly, since $\Sigma_n\to\Sigma$ in probability and $\Sigma$ is positive definite,
\(
\mathbb P(\Sigma_n \text{ is positive definite})\to 1.
\)
Hence
\(
\mathbb P(E_n)\to 1.
\)
Since
\[
H_n\to H
\quad\text{and}\quad
\Sigma_n\to\Sigma
\qquad\text{in probability},
\]
and since the map
\(
(A,B)\mapsto 4A^{-1}BA^{-1}
\)
is continuous at $(H,\Sigma)$ on the cone of positive definite matrices, it follows that
\(
\widehat\Gamma_n\to\Gamma
\quad\text{in probability}.
\)
Since $\Gamma$ is positive definite, continuity of inversion and matrix square root on the cone of positive definite matrices yields
\(
\widehat\Gamma_n^{-1/2}\to \Gamma^{-1/2}
\quad\text{in probability}.
\)
By Slutsky's theorem,
\(
\widehat\Gamma_n^{-1/2}\sqrt n\,\bar U_n
\rightarrow_{\mathcal D}
\Gamma^{-1/2}Z,
\, Z\sim\mathcal N(0,\Gamma).
\)
Since $\Gamma^{-1/2}Z\sim\mathcal N(0,I_m)$, the continuous mapping theorem applied to $\norm{.}^2$ gives
\(
\bigl\|\widehat\Gamma_n^{-1/2}\sqrt n\,\bar U_n\bigr\|^2
\rightarrow_{\mathcal D}\chi_m^2.
\)
Finally, for every $\delta>0$,
\[
\mathbb P\!\left(
\bigl|T_n-\|\widehat\Gamma_n^{-1/2}\sqrt n\,\bar U_n\|^2\bigr|>\delta
\right)
\le \mathbb P(E_n^c)\to 0,
\]
because on $E_n$ the two quantities coincide. Therefore
\[
T_n\rightarrow_{\mathcal D}\chi_m^2.
\]
\end{proof}

\medskip

\begin{theorem}[\textbf{Consistent estimator of tangential covariance $\Sigma$ above, and a modulation-aware one-sample test statistic}]
\label{thm:mod-aware-test}
Assume the corresponding support assumption stated at the beginning of this section, namely Assumption~\ref{assum:bounded-supp-general} in the general case or Assumption~\ref{assum:bounded-supp-rot} in the rotationally symmetric case.  Assume that \(H\) and \(\Sigma\) are positive definite, and that the non-smeary CLT
\(\sqrt n\,\bar U_n\rightarrow_{\mathcal D}\mathcal N(0,\Gamma),\) where  \(\Gamma:=4H^{-1}\Sigma H^{-1}\). Fix a smooth local orthonormal frame on a neighborhood of $\bar{\mu}$, and represent all tangent vectors in this frame. Define
\[
V_{i,n}:=\log_{\hat{\mu}_n}(X_i),
\qquad
\bar V_n:=\frac1n\sum_{i=1}^n V_{i,n},
\]
and set the tangential sample covariance matrix
\[
\Sigma_n:=\frac1n\sum_{i=1}^n (V_{i,n}-\bar V_n)(V_{i,n}-\bar V_n)^T.
\]
Then
\[
\Sigma_n\to \Sigma
\qquad\text{in probability}.
\]
Consequently, in Equation \eqref{eqn:Gamma-n,T-n},
we have
\[
\widehat\Gamma_n\to \Gamma
\qquad\text{in probability},
\qquad
T_n\rightarrow_{\mathcal D}\chi_m^2.
\]
\end{theorem}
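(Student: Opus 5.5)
The plan mirrors the proof of Proposition~\ref{prop:Hn_consistency}: compare the plug-in quantities built at $\hat\mu_n$ with oracle quantities built at $\bar\mu$, control the difference by $d(\hat\mu_n,\bar\mu)$, and then apply the weak law of large numbers to the oracle quantities. The consequences for $\widehat\Gamma_n$ and $T_n$ then follow directly from Proposition~\ref{prop:quadratic_chisq}.

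\textbf{Step 1 (localization).} By \citet{Ziezold1977} and uniqueness of $\bar\mu$, we have $\hat\mu_n\to\bar\mu$ in probability. Choose $\varepsilon>0$ with $R_\ast+\varepsilon<\pi$; in the general case one may even take $R_\ast+\varepsilon<\pi/2$. Work on the event $A_{n,\varepsilon}=\{d(\hat\mu_n,\bar\mu)\le\varepsilon\}$, whose probability tends to $1$.

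On this event every point of $\operatorname{supp}(X)$ lies at distance less than $\pi$ from $\hat\mu_n$, so it stays uniformly away from the cut locus. Consequently the map $(p,x)\mapsto\log_p(x)$, expressed in the fixed smooth orthonormal frame on a neighborhood of $\bar\mu$, is smooth on the compact set $\overline B(\bar\mu;\varepsilon)\times\operatorname{supp}(X)$. This yields a Lipschitz bound
\[
\|\log_p(x)-\log_{\bar\mu}(x)\|\le C\,d(p,\bar\mu),
\]
uniformly in $x\in\operatorname{supp}(X)$. Together with $\|\log_p(x)\|\le\pi$, this gives the corresponding bound for the outer products $\log_p(x)\log_p(x)^T$, with a possibly larger constant.

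\textbf{Step 2 (oracle comparison).} Set $W_i:=\log_{\bar\mu}(X_i)$ and define the oracle covariance
\[
\widetilde\Sigma_n:=\frac1n\sum_{i=1}^n (W_i-\bar W_n)(W_i-\bar W_n)^T.
\]
The $W_i$ are bounded, so the weak law of large numbers applied entrywise to $\frac1n\sum_i W_iW_i^T$ and to $\bar W_n$ gives $\widetilde\Sigma_n\to\mathbb E[WW^T]-\mathbb E[W]\mathbb E[W]^T=\Sigma$ in probability. (Here $\mathbb E[W]=0$ by the first-order condition at $\bar\mu$, though this is not needed.)

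Write $\Sigma_n=\frac1n\sum_i V_{i,n}V_{i,n}^T-\bar V_n\bar V_n^T$ and the analogous expression for $\widetilde\Sigma_n$. By Step 1, on $A_{n,\varepsilon}$,
\[
\|\Sigma_n-\widetilde\Sigma_n\|\le C'\,d(\hat\mu_n,\bar\mu),
\]
and the right-hand side tends to $0$ in probability. Therefore $\Sigma_n\to\Sigma$ in probability.

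\textbf{Step 3 (consequences).} Proposition~\ref{prop:Hn_consistency} gives $H_n\to H$ under the corresponding support assumption, and Step 2 gives $\Sigma_n\to\Sigma$ with $\Sigma$ positive definite. Hence all hypotheses of Proposition~\ref{prop:quadratic_chisq} hold. Its proof already shows $\widehat\Gamma_n\to\Gamma$ in probability and $T_n\to_{\mathcal D}\chi^2_m$.

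\textbf{Main obstacle.} The delicate point is the uniform Lipschitz control of $\log_p(x)$ in the base point $p$, expressed in a fixed local frame. In the rotationally symmetric case the support radius $R_\ast<R_m$ may exceed $\pi/2$, so the argument relies only on staying a positive distance from the antipode; the constraint $R_\ast+\varepsilon<\pi$ is exactly what keeps the logarithm smooth there. I would justify the smoothness via the explicit formula
\[
\log_p(x)=\frac{\theta}{\sin\theta}\bigl(x-\cos\theta\,p\bigr),\qquad \theta=\arccos\langle p,x\rangle,
\]
which is smooth while $\theta<\pi$, and then project onto the frame vectors, which are themselves smooth in $p$.
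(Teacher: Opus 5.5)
Your proposal is correct and follows essentially the same route as the paper: compare $\Sigma_n$ with the oracle covariance of $\log_{\bar\mu}(X_i)$ via a uniform Lipschitz bound on $(p,x)\mapsto\log_p(x)$ on the event $\{d(\hat\mu_n,\bar\mu)\le\varepsilon\}$, apply a law of large numbers to the oracle covariance, and then invoke Propositions~\ref{prop:Hn_consistency} and~\ref{prop:quadratic_chisq}. The differences are cosmetic and both versions suffice: you use the weak law where the paper uses the strong law, you take the margin $R_\ast+\varepsilon<\pi$ where the paper takes $R_\ast+\varepsilon<R_m$, and you justify smoothness of the logarithm through the explicit spherical formula.
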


\begin{proof}
Set
\[
V_i:=\log_{\bar{\mu}}(X_i),
\qquad
\widetilde{\bar V}_n:=\frac1n\sum_{i=1}^n V_i,
\qquad
\widetilde\Sigma_n:=\frac1n\sum_{i=1}^n (V_i-\widetilde{\bar V}_n)(V_i-\widetilde{\bar V}_n)^T.
\]
Since $\operatorname{supp}(X)\subset \overline B(\bar{\mu};R_\ast)$ with $R_\ast<R_m<\pi$, we may choose $\varepsilon>0$ such that
\(
R_\ast+\varepsilon<R_m <\pi.
\)
Then, for every $p\in \overline B(\bar{\mu};\varepsilon)$ and every $x\in \operatorname{supp}(X)$,
\(
d(p,x)\le d(p,\bar{\mu})+d(\bar{\mu},x)\le \varepsilon+R_\ast< R_m < \pi.
\)
Hence $x$ stays away from the cut locus of $p$, and therefore the map
\(
(p,x)\mapsto \log_p(x)
\)
is smooth on the compact set
\(
\overline B(\bar{\mu};\varepsilon)\times \operatorname{supp}(X).
\)
In particular, there exists $C<\infty$ such that on the event
\(
A_{n,\varepsilon}:=\{d(\hat{\mu}_n,\bar{\mu})\le \varepsilon\},
\),
one has
\[
\|V_{i,n}-V_i\|
=
\|\log_{\hat{\mu}_n}(X_i)-\log_{\bar{\mu}}(X_i)\|
\le C\,d(\hat{\mu}_n,\bar{\mu})
\]
for every $i=1,\dots,n$.

Since $\hat{\mu}_n\to\bar{\mu}$ in probability by the consistency theorem in \citet{Ziezold1977}, and uniqueness of the population Fréchet mean, we have
\(
\mathbb P(A_{n,\varepsilon})\to 1.
\)

Using the definitions
\[
\bar V_n=\frac1n\sum_{i=1}^n V_{i,n},
\qquad
\widetilde{\bar V}_n=\frac1n\sum_{i=1}^n V_i,
\]
we obtain, on $A_{n,\varepsilon}$,
\[
\|\bar V_n-\widetilde{\bar V}_n\|
=
\left\|\frac1n\sum_{i=1}^n (V_{i,n}-V_i)\right\|
\le
\frac1n\sum_{i=1}^n \|V_{i,n}-V_i\|
\le
\frac1n\sum_{i=1}^n C\,d(\hat{\mu}_n,\bar{\mu})
=
C\,d(\hat{\mu}_n,\bar{\mu}).
\]

\nd Next, since
\(
\|V_i\| = d(\mubar, X_i)\le R_\ast,
\|V_{i,n}\| = d(\hat{\mu}_n, X_i)\le R_\ast+\varepsilon
\)
on $A_{n,\varepsilon}$, using the Lipschitz continuity on \textit{bounded sets} of $v\mapsto vv^{\top}$, there exists $C'<\infty$ such that
\[
\|V_{i,n}V_{i,n}^T-V_iV_i^T\|
\le C'\,\|V_{i,n}-V_i\|
\le C'C\,d(\hat{\mu}_n,\bar{\mu})
\]
for every $i=1,\dots,n$. Therefore, on $A_{n,\varepsilon}$,
\[
\left\|
\frac1n\sum_{i=1}^n V_{i,n}V_{i,n}^T
-
\frac1n\sum_{i=1}^n V_iV_i^T
\right\|
\le C'C\,d(\hat{\mu}_n,\bar{\mu}).
\]

\nd Using the identity
\[
\Sigma_n=
\frac1n\sum_{i=1}^n V_{i,n}V_{i,n}^T-\bar V_n\bar V_n^T,
\qquad
\widetilde\Sigma_n=
\frac1n\sum_{i=1}^n V_iV_i^T-\widetilde{\bar V}_n\widetilde{\bar V}_n^T,
\]
it follows that
\[
\Sigma_n-\widetilde\Sigma_n\to 0
\qquad\text{in probability}.
\]
On the other hand, by the strong law of large numbers,
\(
\widetilde\Sigma_n\to \Sigma
\text{ a.s.}
\)
Hence
\(
\Sigma_n\to\Sigma
\text{ in probability}.
\)

\nd The convergence
\(
\widehat\Gamma_n\to\Gamma \text{ in Equation } \eqref{eqn:Gamma-n,T-n}
\text{ in probability}
\)
now follows exactly as in Proposition~\ref{prop:quadratic_chisq}, using Proposition~\ref{prop:Hn_consistency}. The conclusion
\(
T_n\rightarrow_{\mathcal D}\chi_m^2
\)
is then an immediate application of Proposition~\ref{prop:quadratic_chisq}, with the above practical choice of $\Sigma_n$.
\end{proof}

\noindent
\begin{remark}[\textbf{Relation with earlier FSS corrections and novelty of the present approach}]
The statistic $T_n$ in Theorem~\ref{thm:mod-aware-test}, (cf. Equation \eqref{eqn:Gamma-n,T-n}) provides a direct Hessian-corrected quadratic form for asymptotic inference in the non-smeary regime. This should be contrasted with the finite sample smeariness literature, where correction of quantile-based inference was carried out via bootstrap procedures; see, for example, Section~4 of \cite{EltznerFSSonspheres} and also \cite{HH2}. The reason is that in that line of work one does not build an \textit{explicit plug-in estimator} for the Hessian correction entering the covariance term $4H^{-1}\Sigma H^{-1}$, and so bootstrap-based calibration is used instead.

In the present sphere setting, by contrast, the Hessian is \textit{explicit} for measures with a density in both rotationally symmetric (Equation \eqref{eqn:Hess}) as well as general cases (Equation \eqref{eqn:Hessian-general-density}); in the rotationally symmetric case it reduces to the particularly simple scalar form used above. This makes it straightforward to construct a consistent plug-in estimator $H_n$ and hence a consistent estimator
\[
\widehat\Gamma_n=4H_n^{-1}\Sigma_n H_n^{-1}.
\]
Accordingly, the resulting quadratic form $T_n$ yields an explicit modulation-aware testing procedure based on a directly \textit{estimated Hessian correction}, rather than on\textit{ bootstrap calibration}. We now use this estimator $T_n$ for inference in the next Section \ref{sec:numerical-bootstrap-vs-hessian}.
\end{remark}

%

\subsection{Modulation-aware, Hessian-corrected, two-sample Hotelling statistic}

We now record the two-sample analogue of Theorem~\ref{thm:mod-aware-test}. This is the version used for testing equality of two Fréchet means.

\begin{theorem}[\textbf{Two-sample Hessian-corrected Hotelling statistic with possibly unequal covariance where pooled covariance is invertible (Behrens-Fisher problem)}]
\label{thm:two-sample-mod-aware-hotelling}
Let \(X_{1,1},\dots,X_{1,n_1}\) and \(X_{2,1},\dots,X_{2,n_2}\) be two independent samples on \(\mathbb S^m\), with laws \(\mu_1,\mu_2\) and unique population Fr\'echet means \(\bar\mu_1,\bar\mu_2\).  Consider the null hypothesis \(H_0:\bar\mu_1=\bar\mu_2=:\bar\mu\).  For \(j=1,2\), let \(F_j(p):=\int_{\mathbb S^m}d^2(p,x)\,d\mu_j(x)\), and define, under \(H_0\),
\[
H_j:=D^2(F_j\circ\exp_{\bar\mu})(0),
\qquad
\Sigma_j:=\operatorname{Cov}(\log_{\bar\mu}X_j),
\qquad
\Gamma_j:=4H_j^{-1}\Sigma_jH_j^{-1}.
\]
Assume that, group-wise under \(H_0\), the corresponding support condition from Assumption~\ref{assum:bounded-supp-general} or Assumption~\ref{assum:bounded-supp-rot} holds.  Assume also that \(H_j\) and \(\Sigma_j\) are positive definite, and that the non-smeary CLT holds for each group:
\[
\sqrt{n_j}\log_{\bar\mu}(\hat\mu_{j,n_j})
\rightarrow_{\mathcal D}
\mathcal N(0,\Gamma_j),
\qquad j=1,2,
\qquad n_j \to \infty
\]
where \(\hat\mu_{j,n_j}\) is a measurable sample Fr\'echet mean of the \(j\)-th sample.  Let \(n:=n_1+n_2\), and assume \(n_1/n\to\lambda\in(0,1)\), hence \(n_2/n\to1-\lambda\).

Define the group-wise sample Fréchet functions by \(F_{j,n_j}(p):=n_j^{-1}\sum_{i=1}^{n_j}d^2(p,X_{j,i})\), \(j=1,2\), and let \(\hat\mu_{0,n}\) be a measurable pooled sample Fr\'echet mean, that is,
\[
\hat\mu_{0,n}\in \operatorname*{arg\,min}_{p\in\mathbb S^m} F_{0,n}(p),
\qquad
F_{0,n}(p):=\frac{n_1}{n}F_{1,n_1}(p)+\frac{n_2}{n}F_{2,n_2}(p)
=\frac1n\sum_{j=1}^2\sum_{i=1}^{n_j}d^2(p,X_{j,i}).
\]

Represent all tangent vectors below in a smooth local orthonormal frame near \(\bar\mu\).  For \(j=1,2\) and \(i=1,\dots,n_j\), set \(V^0_{j,i}:=\log_{\hat\mu_{0,n}}(X_{j,i})\), \(R^0_{j,i}:=\|V^0_{j,i}\|\), and \(\Theta^0_{j,i}:=V^0_{j,i}/R^0_{j,i}\) when \(R^0_{j,i}>0\).  Define \(\widehat H^0_{j,n_j}\) by the group-wise version of the plug-in Hessian estimator given by \eqref{eqn:Hess-est-gen} in the general case, or by \eqref{eqn:Hess-est-rot-symm} in the rotationally symmetric case, using \(\hat\mu_{0,n}\) as base point.  Define
\[
\widehat\Sigma^0_{j,n_j}
:=
\frac1{n_j}\sum_{i=1}^{n_j}
(V^{0}_{j,i}-\bar V^0_j)(V^{0}_{j,i}-\bar V^0_j)^T,
\qquad
\bar V^0_j:=\frac1{n_j}\sum_{i=1}^{n_j}V^{0}_{j,i},
\qquad
j=1, 2,
\]
and, whenever the inverses exist, set
\[
\widehat\Gamma^0_{j,n_j}:=
4(\widehat H^0_{j,n_j})^{-1}
\widehat\Sigma^0_{j,n_j}
(\widehat H^0_{j,n_j})^{-1}.
\]
Finally define
\[
D_{n_1,n_2}:=
\log_{\hat\mu_{0,n}}(\hat\mu_{1,n_1})
-
\log_{\hat\mu_{0,n}}(\hat\mu_{2,n_2}).
\]
On the event
\[
E_n:=
\left\{
\widehat H^0_{1,n_1},\widehat H^0_{2,n_2}
\text{ are invertible and }
\frac{\widehat\Gamma^0_{1,n_1}}{n_1}
+
\frac{\widehat\Gamma^0_{2,n_2}}{n_2}
\text{ is invertible}
\right\},
\]
define
\begin{equation}
\label{eqn:two-sample-hotelling-stat}
T_{n_1,n_2}
:=
D_{n_1,n_2}^T
\left(
\frac{\widehat\Gamma^0_{1,n_1}}{n_1}
+
\frac{\widehat\Gamma^0_{2,n_2}}{n_2}
\right)^{-1}
D_{n_1,n_2}.
\end{equation}
On \(E_n^c\), set \(T_{n_1,n_2}:=0\).  Then, under \(H_0\),
\[
T_{n_1,n_2}\rightarrow_{\mathcal D}\chi^2_m,\, n_1, n_2 \to \infty, \, \frac{n_1}{n_1+n_2} \to \lambda \in (0, 1) .
\]
Thus the asymptotic level-\(\alpha\) test rejects \(H_0\) whenever \(T_{n_1,n_2}>\chi^2_{m,1-\alpha}\).
\end{theorem}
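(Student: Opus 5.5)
The plan is to reduce $T_{n_1,n_2}$ to the one-sample ingredients of Theorem~\ref{thm:mod-aware-test}, via a change of base point from $\hat\mu_{0,n}$ to the fixed point $\bar\mu$ and Slutsky's theorem.

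\emph{Step 1: consistency of the pooled mean.} Under $H_0$, the pooled population Fr\'echet function $\lambda F_1+(1-\lambda)F_2$ has Hessian $\lambda H_1+(1-\lambda)H_2\succ0$ at $\bar\mu$. It also has vanishing gradient there. Hence $\bar\mu$ is a strict local minimizer. We additionally need it to be the unique global minimizer; this follows from the support assumptions (both measures are supported in a small cap around $\bar\mu$, where the squared distance is strictly convex). Ziezold's consistency argument applied to $F_{0,n}$ then gives $\hat\mu_{0,n}\to\bar\mu$ in probability. The same argument gives $\hat\mu_{j,n_j}\to\bar\mu$.

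\emph{Step 2: nuisance estimators.} Repeat verbatim the Lipschitz arguments in the proofs of Proposition~\ref{prop:Hn_consistency} and Theorem~\ref{thm:mod-aware-test}, but with base point $\hat\mu_{0,n}$ in place of $\hat\mu_n$. These arguments used only $d(\hat\mu_n,\bar\mu)\to0$ in probability and smoothness of $(p,x)\mapsto\log_p x$ on $\overline B(\bar\mu;\varepsilon)\times\operatorname{supp}$. Hence $\widehat H^0_{j,n_j}\to H_j$ and $\widehat\Sigma^0_{j,n_j}\to\Sigma_j$ in probability, and therefore $\widehat\Gamma^0_{j,n_j}\to\Gamma_j$. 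Consequently $n(\widehat\Gamma^0_{1,n_1}/n_1+\widehat\Gamma^0_{2,n_2}/n_2)\to\Gamma_1/\lambda+\Gamma_2/(1-\lambda)=:\Omega\succ0$, and $\mathbb P(E_n)\to1$.

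\emph{Step 3: asymptotics of $D_{n_1,n_2}$ (the main obstacle).} We need
\[
\sqrt n\,D_{n_1,n_2}=\sqrt n\bigl(\log_{\bar\mu}\hat\mu_{1,n_1}-\log_{\bar\mu}\hat\mu_{2,n_2}\bigr)+o_P(1).
\]
Consider the smooth map $\Phi(p,q):=\log_p q$ near $(\bar\mu,\bar\mu)$, written in the fixed frame. Since $\Phi(p,p)=0$, Taylor's theorem gives $\Phi(p,q)=D_q\Phi(\bar\mu,\bar\mu)[\log_{\bar\mu}q]+D_p\Phi(\bar\mu,\bar\mu)[\log_{\bar\mu}p]+O(|\log_{\bar\mu}p|^2+|\log_{\bar\mu}q|^2)$. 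Here $D_q\Phi(\bar\mu,\bar\mu)=I$ and $D_p\Phi(\bar\mu,\bar\mu)=-I$.

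In the difference $\Phi(\hat\mu_0,\hat\mu_1)-\Phi(\hat\mu_0,\hat\mu_2)$ the $p$-linear terms cancel exactly. The quadratic remainders must be $o_P(n^{-1/2})$. This requires $\log_{\bar\mu}\hat\mu_{0,n}=O_P(n^{-1/2})$ as well as $\log_{\bar\mu}\hat\mu_{j,n_j}=O_P(n^{-1/2})$, and the latter holds by the group CLTs. For the pooled mean I would first try to avoid a separate CLT. The gradient of $F_{0,n}$ at $\bar\mu$ is $\frac{n_1}{n}\nabla F_{1,n_1}(\bar\mu)+\frac{n_2}{n}\nabla F_{2,n_2}(\bar\mu)$, which is $O_P(n^{-1/2})$ by the ordinary CLT for the bounded tangent vectors, since each population gradient vanishes. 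Combining this with $\hat\mu_{0,n}\to\bar\mu$ and the uniform positive-definiteness of the empirical Hessians near $\bar\mu$ (uniform LLN on the compact neighbourhood), the standard M-estimator argument gives $O_P(n^{-1/2})$. If needed, one can instead use that $\hat\mu_{0,n}$ lies in a geodesic convex cap and apply a mean-value argument.

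\emph{Step 4: conclusion.} By independence of the two samples and the group CLTs with $n/n_1\to1/\lambda$, we get $\sqrt n(\log_{\bar\mu}\hat\mu_{1,n_1}-\log_{\bar\mu}\hat\mu_{2,n_2})\to_{\mathcal D}\mathcal N(0,\Omega)$. Note that marginal convergence plus independence yields joint convergence. Then
\[
T_{n_1,n_2}=(\sqrt n D)^T\bigl[n(\widehat\Gamma^0_1/n_1+\widehat\Gamma^0_2/n_2)\bigr]^{-1}(\sqrt nD)
\]
on $E_n$. Slutsky's theorem and continuous mapping give the limit $\chi^2_m$, and the event $E_n^c$ is handled exactly as in Proposition~\ref{prop:quadratic_chisq}.
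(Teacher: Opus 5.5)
Your proposal is correct, and Steps 1, 2 and 4 essentially match the paper's proof. The genuine difference is Step 3, the change of base point from $\hat\mu_{0,n}$ to $\bar\mu$.

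The paper writes $L(a,b):=\log_{\exp_{\bar\mu}a}(\exp_{\bar\mu}b)$ in the frame and uses $L(0,b)=b$. It then applies the fundamental theorem of calculus in the $b$-variable only. With $\beta_t:=b_2+t(b_1-b_2)$,
\[
L(a,b_1)-L(a,b_2)-(b_1-b_2)
=\int_0^1\bigl(D_bL(a,\beta_t)-D_bL(0,\beta_t)\bigr)(b_1-b_2)\,dt
=O\bigl(|a|\,|b_1-b_2|\bigr).
\]
With $a=\log_{\bar\mu}\hat\mu_{0,n}$ and $b_j=\log_{\bar\mu}\hat\mu_{j,n_j}$, this error is $o_P(1)\cdot O_P(n^{-1/2})$. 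So mere consistency of the pooled mean suffices, and your ``main obstacle'' never arises.

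Your joint first-order expansion of $\Phi$ at $(\bar\mu,\bar\mu)$ has an $O(|a|^2+|b|^2)$ remainder. That bound does not see that the $|a|^2$ parts of the two remainders cancel in the difference. This is why you must show $\log_{\bar\mu}\hat\mu_{0,n}=O_P(n^{-1/2})$; in fact $o_P(n^{-1/4})$ would suffice.

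Your M-estimator argument for this rate is sound:
\begin{itemize}
\item the gradient of $F_{0,n}$ vanishes at $\hat\mu_{0,n}$ on the high-probability event where everything stays away from the cut locus;
\item $\nabla(F_{0,n}\circ\exp_{\bar\mu})(0)=O_P(n^{-1/2})$ by the CLT for the bounded, centered vectors $\log_{\bar\mu}X_{j,i}$;
\item a mean-value step works, using the uniform LLN for the coordinate Hessians on a compact neighbourhood together with $\lambda H_1+(1-\lambda)H_2\succ0$.
\end{itemize}
So your route costs an extra argument but yields the $\sqrt n$-rate of the pooled mean as a by-product. The paper's sharper remainder bound makes that rate unnecessary.

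One small correction in Step 1. You justify global uniqueness by strict convexity of the squared distance in a small cap, but this does not cover Assumption~\ref{assum:bounded-supp-rot}, where $R_\ast$ may exceed $\pi/2$. You do not need it: $\bar\mu$ is by hypothesis the unique minimizer of both $F_1$ and $F_2$, hence of $\lambda F_1+(1-\lambda)F_2$. This is how the paper argues, combining it with uniform convergence of $F_{0,n}$ and an argmin theorem rather than Ziezold's i.i.d.\ result, since the pooled sample is not identically distributed.
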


\begin{proof}
Under \(H_0\), the two population Fr\'echet means coincide at \(\bar\mu\).  We take the measurable selections \(\hat\mu_{1,n_1}\) and \(\hat\mu_{2,n_2}\) to be group-wise, that is, each depends only on its own sample.  Hence, by independence of the two samples, the two group-wise sample Fr\'echet means are independent.  Therefore the group-wise non-smeary CLTs, independence, and \(n_1/n\to\lambda\in(0,1)\) imply
\[
\sqrt n\bigl(\log_{\bar\mu}(\hat\mu_{1,n_1})
-
\log_{\bar\mu}(\hat\mu_{2,n_2})\bigr)
\rightarrow_{\mathcal D}
\mathcal N(0,\Gamma_\lambda),
\qquad
\Gamma_\lambda:=\frac{\Gamma_1}{\lambda}+\frac{\Gamma_2}{1-\lambda}.
\]
We first note that \(\hat\mu_{0,n}\to\bar\mu\) in probability.  Indeed, by definition,
\[
F_{0,n}=\frac{n_1}{n}F_{1,n_1}+\frac{n_2}{n}F_{2,n_2}.
\]
Since the sphere is compact, the group-wise empirical Fr\'echet functions \(F_{j,n_j}\) converge uniformly in probability to \(F_j\), \(j=1,2\).  Together with \(n_1/n\to\lambda\in(0,1)\), this gives uniform convergence of \(F_{0,n}\) to
\[
F_0:=\lambda F_1+(1-\lambda)F_2.
\]
Under \(H_0\), both \(F_1\) and \(F_2\) have the same unique minimizer \(\bar\mu\); hence the positive weighted sum \(F_0\) also has unique minimizer \(\bar\mu\).  The argmin consistency argument (see e.g.  \citet{vanderVaart1998}, Theorem 5.7, and apply this to $-F_{0,n}$) therefore gives \(\hat\mu_{0,n}\to\bar\mu\) in probability.

We next justify replacing the fixed base point \(\bar\mu\) by \(\hat\mu_{0,n}\).  By the support assumptions, all relevant points lie, with probability tending to one, in a common compact neighborhood on which the logarithm map is smooth away from the cut locus.  Work in a smooth local orthonormal frame near \(\bar\mu\), and write
\(a:=\log_{\bar\mu}q\), \(b_i:=\log_{\bar\mu}p_i\).  In this frame set
\[
L(a,b):=\log_{\exp_{\bar\mu}a}(\exp_{\bar\mu}b).
\]
The map \(L\) is smooth near \((0,0)\), and \(L(0,b)=b\).  Hence, by the fundamental theorem of calculus,
\[
\begin{aligned}
&L(a,b_1)-L(a,b_2)-\{L(0,b_1)-L(0,b_2)\} \\
&\qquad =
\int_0^1
\bigl(D_bL(a,b_2+t(b_1-b_2))-D_bL(0,b_2+t(b_1-b_2))\bigr)(b_1-b_2)\,dt .
\end{aligned}
\]
Since \(D_bL\) is smooth, the last display is
\(O(|a|\,|b_1-b_2|)\), hence, equivalently, for \(p_1,p_2,q\) sufficiently close to \(\bar\mu\),
\[
\begin{aligned}
&\log_q(p_1)-\log_q(p_2)
-
\bigl(\log_{\bar\mu}(p_1)-\log_{\bar\mu}(p_2)\bigr)
\\
&\qquad =
O\!\left(
d(q,\bar\mu)\{d(p_1,\bar\mu)+d(p_2,\bar\mu)\}
\right).
\end{aligned}
\]

Apply this with \(q=\hat\mu_{0,n}\) and \(p_j=\hat\mu_{j,n_j}\).  Since \(\hat\mu_{0,n}\to\bar\mu\) in probability and the group-wise CLTs imply \(d(\hat\mu_{j,n_j},\bar\mu)=O_{\mathbb P}(n_j^{-1/2})=O_{\mathbb P}(n^{-1/2})\) (this is immediate once we use the fact that a sequence of random variable converging in distribution to a random variable is tight), we obtain
\[
D_{n_1,n_2}
=
\log_{\bar\mu}(\hat\mu_{1,n_1})
-
\log_{\bar\mu}(\hat\mu_{2,n_2})
+
o_{\mathbb P}(n^{-1/2}).
\]
Consequently,
\[
\sqrt n\,D_{n_1,n_2}
\rightarrow_{\mathcal D}
\mathcal N(0,\Gamma_\lambda).
\]

The consistency proof of Proposition~\ref{prop:Hn_consistency} is local and uniform for base points in a sufficiently small neighborhood of \(\bar\mu\).  Since \(\hat\mu_{0,n}\to\bar\mu\) in probability, the same group-wise argument with the random base point \(\hat\mu_{0,n}\) gives
\[
\widehat H^0_{j,n_j}\to H_j
\qquad\text{in probability},\qquad j=1,2.
\]
Similarly, the argument of Theorem~\ref{thm:mod-aware-test}, again used locally uniformly in the base point, gives
\[
\widehat\Sigma^0_{j,n_j}\to\Sigma_j
\qquad\text{in probability},\qquad j=1,2.
\]
Hence \(\widehat\Gamma^0_{j,n_j}\to\Gamma_j\) in probability for \(j=1,2\).  Since \(n_1/n\to\lambda\) and \(n_2/n\to1-\lambda\),
\[
C_n:=
n\left(
\frac{\widehat\Gamma^0_{1,n_1}}{n_1}
+
\frac{\widehat\Gamma^0_{2,n_2}}{n_2}
\right)
\to
\Gamma_\lambda
\qquad\text{in probability}.
\]
The matrix \(\Gamma_\lambda\) is positive definite because \(\Gamma_1,\Gamma_2\) are positive definite and \(\lambda\in(0,1)\).  Therefore, by continuity of the smallest eigenvalue,
\[
\lambda_{\min}(C_n)\to \lambda_{\min}(\Gamma_\lambda)>0
\qquad\text{in probability}.
\]
Since multiplication by the positive scalar \(n\) does not affect invertibility, it follows that
\[
\mathbb P\!\left(
\frac{\widehat\Gamma^0_{1,n_1}}{n_1}
+
\frac{\widehat\Gamma^0_{2,n_2}}{n_2}
\text{ is invertible}
\right)\to1.
\]
Together with \(\widehat H^0_{j,n_j}\to H_j\) and \(H_j\) positive definite, which imply
\[
\mathbb P\bigl(\widehat H^0_{1,n_1},\widehat H^0_{2,n_2}
\text{ are invertible}\bigr)\to1,
\]
we obtain \(\mathbb P(E_n)\to1\).

On \(E_n\), the statistic can be written as
\[
T_{n_1,n_2}
=
(\sqrt n\,D_{n_1,n_2})^T
C_n^{-1}
(\sqrt n\,D_{n_1,n_2}).
\]
Since \(\sqrt n\,D_{n_1,n_2}\rightarrow_{\mathcal D}\mathcal N(0,\Gamma_\lambda)\) and \(C_n\to\Gamma_\lambda\) in probability, Slutsky's theorem and the continuous mapping theorem give
\[
T_{n_1,n_2}
\rightarrow_{\mathcal D}
Z^T\Gamma_\lambda^{-1}Z,
\qquad
Z\sim\mathcal N(0,\Gamma_\lambda).
\]
Thus \(Z^T\Gamma_\lambda^{-1}Z\sim\chi^2_m\).  Since \(\mathbb P(E_n^c)\to0\), the convention \(T_{n_1,n_2}=0\) on \(E_n^c\) does not affect the limiting distribution.  This proves the claim.
\end{proof}
\section{Numerical experiments on synthetic data: bootstrap-based versus Hessian-corrected tests}
\label{sec:numerical-bootstrap-vs-hessian}

\subsection{Experiments with synthetic dataset}

This section reports numerical experiments comparing three one-sample tests for the Fréchet mean on spheres, given the iid sample $\{X_1, X_2, \dots X_n\}$ that will be generated in a way to be made precise.

The first test is the \emph{naive quantile-based test}.  In the numerical experiments below, the null hypothesis is \(H_0:\mubar=N\).  We therefore measure the displacement of the sample Fr\'echet mean from the null mean \(N\), and set
\[
U_{0,n}:=\log_N(\hat\mu_n).
\]
The \textit{naive} quantile-based statistic is
\[
Q_n^{\mathrm{naive}}
:=
n\,U_{0,n}^{T}\widehat\Sigma_{0,n}^{-1}U_{0,n},
\]
where \(\widehat\Sigma_{0,n}\) is the empirical covariance matrix of the tangent vectors
\(\log_N(X_1),\dots,\log_N(X_n)\).  The test rejects for large values of
\(Q_n^{\mathrm{naive}}\), using the \(\chi_m^2\) quantile as critical value.  This is the direct Euclidean tangent-space calibration: it would be \(\chi_m^2\)-calibrated if the Hessian correction were absent, as in the Euclidean case, where $H=2I_m$.  In the non-smeary curved setting, however, the limiting covariance contains the Hessian correction \(\Gamma=4H^{-1}\Sigma H^{-1}\); see Section~\ref{scn:modulation-aware-tests}.

The second test is the \emph{bootstrap-based test}. It keeps the same testing problem, but replaces
the direct \(\chi_m^2\)-quantile calibration by bootstrap calibration. This is the type of correction
used in the finite sample smeariness literature to address the poor finite-sample behavior of the
naive quantile approximation; see Section~4 of \cite{EltznerFSSonspheres} and also \cite{HH2}.

The third test is the novel \emph{modulation-aware Hessian-corrected test} developed in
Section~\ref{scn:modulation-aware-tests}.  It uses the tangent vector from the hypothesized mean
\(N\) to the sample Fr\'echet mean,
\[
U_{0,n}:=\log_N(\hat\mu_n),
\]
and the statistic
\[
T_n
=
n\,U_{0,n}^{T}\widehat\Gamma_n^{-1}U_{0,n},
\qquad
\widehat\Gamma_n
=
4H_n^{-1}\Sigma_n H_n^{-1},
\]
where \(H_n\) is the plug-in Hessian estimator from Proposition~\ref{prop:Hn_consistency}, and
\(\Sigma_n\) is the empirical tangential covariance estimator from
Theorem~\ref{thm:mod-aware-test}.  In the rotationally symmetric simulations below, \(H_n\) is the
estimator in Equation~\eqref{eqn:Hess-est-rot-symm} in the rotationally symmetric regime.  By Proposition~\ref{prop:quadratic_chisq}
and Theorem~\ref{thm:mod-aware-test}, under \(H_0\) and in the non-smeary regime,
\[
T_n\rightarrow_{\mathcal D}\chi_m^2
\qquad\text{as } n\to\infty .
\]
Thus this third test also uses a \(\chi_m^2\) critical value, but with the estimated \textit{Hessian correction} (pre-multiplying by $4H_n^{-1}$ and post-multiplying by $H_n^{-1}$) included through 
\(\widehat\Gamma_n\).  The comparison below is therefore
between
\begin{enumerate}
    \item ignoring the Hessian correction,
    \item calibrating by bootstrap,
    \item  and taking into account the Hessian
correction directly.
\end{enumerate}

   The purpose is to illustrate the practical effect of the Hessian correction developed
in Section~\ref{scn:modulation-aware-tests}, especially in Theorem~\ref{thm:mod-aware-test}.
\subsection{Truncated von Mises--Fisher (tvMF) distribution as a hemisphrically supported, non-smeary distribution}\label{sec:tvMF}
We use the term truncated von Mises--Fisher distribution for the
von Mises--Fisher density restricted and renormalized to a geodesic ball.
More precisely, for center \(\mu_p\in\mathbb S^m\), concentration
\(\kappa\ge 0\), and truncation radius \(R_\ast \in (0, \pi]\), the density with
respect to Riemannian volume is
\[
f_{p,\kappa,R_\ast}(x)
=
\frac{
e^{\{\kappa\langle x,\mu_p\rangle\}}
\mathbf 1_{\{d(x,\mu_p)\le R_\ast\}}
}{
\operatorname{vol}(\mathbb S^{m-1})
\int_0^{R_\ast}e^{(\kappa\cos r)}(\sin r)^{m-1}\,dr
}.
\]
Thus, in geodesic polar coordinates \(x=\exp_{\mu_p}(r\theta)\) (N.B. $\exp_{\mu_p}$ is the Riemannian exponential at $\mu_p$), the
radial density is proportional to
\[
\exp(\kappa\cos r)(\sin r)^{m-1},\qquad 0\le r\le R_\ast.
\]
Note that when $R_\ast:=\pi,$ the above distribution reduces to the von Mises Fisher distribution.

\subsubsection{Local angular displacement and simulation design}
\label{subsec:numerical-design}

For each experiment, the null hypothesis is
\[
H_0:\ \bar\mu=N,
\]
where \(N\) is the north pole. The true population mean is shifted along a fixed geodesic by a scalar\textit{ angular parameter} \(p\), namely
\[
\mu_p=\exp_N(p e_1),
\]
where \(e_1\in T_N\mathbb S^m\) is a fixed unit vector. Thus \(p=0\) corresponds to the null hypothesis, while \(p\neq 0\) corresponds to a \textit{local} alternative. In the ambient representation, this is equivalently
\[
\mu_p=\cos(p)N+\sin(p)e_1.
\]

Samples were generated from a non-smeary, rotationally symmetric, truncated von Mises--Fisher type distribution centered at \(\mu_p\), with concentration parameter $\kappa,$ and support in the closed geodesic ball around \(\mu_p\). Since rotations of \(\mathbb S^m\) are isometries, rotating \(\mu_p\) to the north pole preserves the support radius \(R_\ast\) and rotational symmetry. Hence the support-threshold results cited below apply after this rotation and show that the generated law has unique population Fréchet mean \(\mu_p\); consequently \(p=0\) is the null case \(\bar\mu=N\), while \(p\ne0\) gives local alternatives.

In radial coordinates around \(\mu_p\), the radial density is proportional to (see Section \ref{sec:tvMF} above)
\[
e^{(\kappa\cos R)}(\sin R)^{m-1},
\qquad 0\le R\le R_\ast.
\]
In all experiments below we used
$$
R_\ast=1.0,\qquad \kappa=8.0,\qquad \alpha=0.05.
$$
Since \(R_\ast<R_m\) in the dimensions considered below, the experiments are in the \textit{non-smeary} regime (cf. Theorems 5.1 and 6.1 from \citet{Pal2026SharpSupportThresholds} that guarantee non-smeariness when the support is contained in a ball of radius $R_\ast$ in both the rotationally symmetric and general cases) covered by the Hessian estimator in Section~\ref{scn:modulation-aware-tests}, Equation \eqref{eqn:Hess-est-rot-symm} for rotationally symmetric case.

The \textit{naive quantile-based statistic} uses the empirical covariance of tangent vectors at the null mean and compares the resulting quadratic form with the \(\chi^2_m\) quantile. The \textit{bootstrap-based test} uses resampling under the null calibration, following the finite-sample-smeariness correction philosophy in \cite{EltznerFSSonspheres,HH2}. The \textit{modulation-aware test} uses the statistic \(T_n\) from Equation~\eqref{eqn:Gamma-n,T-n}, with the plug-in Hessian estimator \(H_n\) and the tangential covariance estimator \(\Sigma_n\) from Theorem~\ref{thm:mod-aware-test}.

\subsubsection{Low-dimensional experiment on \texorpdfstring{\(\mathbb S^2\)}{S2}}
\label{subsec:numerical-S2}

We first tested the behavior on \(\mathbb S^2\). The parameters were
\[
\text{(dimension) }m=2, \qquad \text{(sample size) }n=400,\qquad B=200,\qquad \text{number of repetitions}=400.
\]
Here \(B\) denotes the number of bootstrap resamples per simulated dataset. The angular grid was
\[
p\in\{-0.10,-0.03,-0.01,0,0.01,0.03,0.10\}.
\]
The empirical rejection probabilities are reported in Table~\ref{tab:S2-rejection} and plotted in Figure~\ref{fig:S2-rejection}.

\begin{table}[H]
\centering
\caption{Empirical rejection probabilities on \(\mathbb S^2\), with \(n=400\), \(400\) Monte Carlo repetitions, and \(B=200\) bootstrap resamples.}
\label{tab:S2-rejection}
\begin{tabular}{rrrr}
\toprule
\(p\) & Naive quantile-based & Bootstrap-based & Modulation-aware \\
\midrule
\(-0.100\) & \(1.000\) & \(1.000\) & \(1.000\) \\
\(-0.030\) & \(0.380\) & \(0.338\) & \(0.345\) \\
\(-0.010\) & \(0.068\) & \(0.062\) & \(0.062\) \\
\(0.000\)  & \(0.068\) & \(0.055\) & \(0.058\) \\
\(0.010\)  & \(0.092\) & \(0.075\) & \(0.070\) \\
\(0.030\)  & \(0.320\) & \(0.285\) & \(0.287\) \\
\(0.100\)  & \(1.000\) & \(1.000\) & \(1.000\) \\
\bottomrule
\end{tabular}
\end{table}

\begin{figure}[H]
\centering
\includegraphics[width=0.72\textwidth]{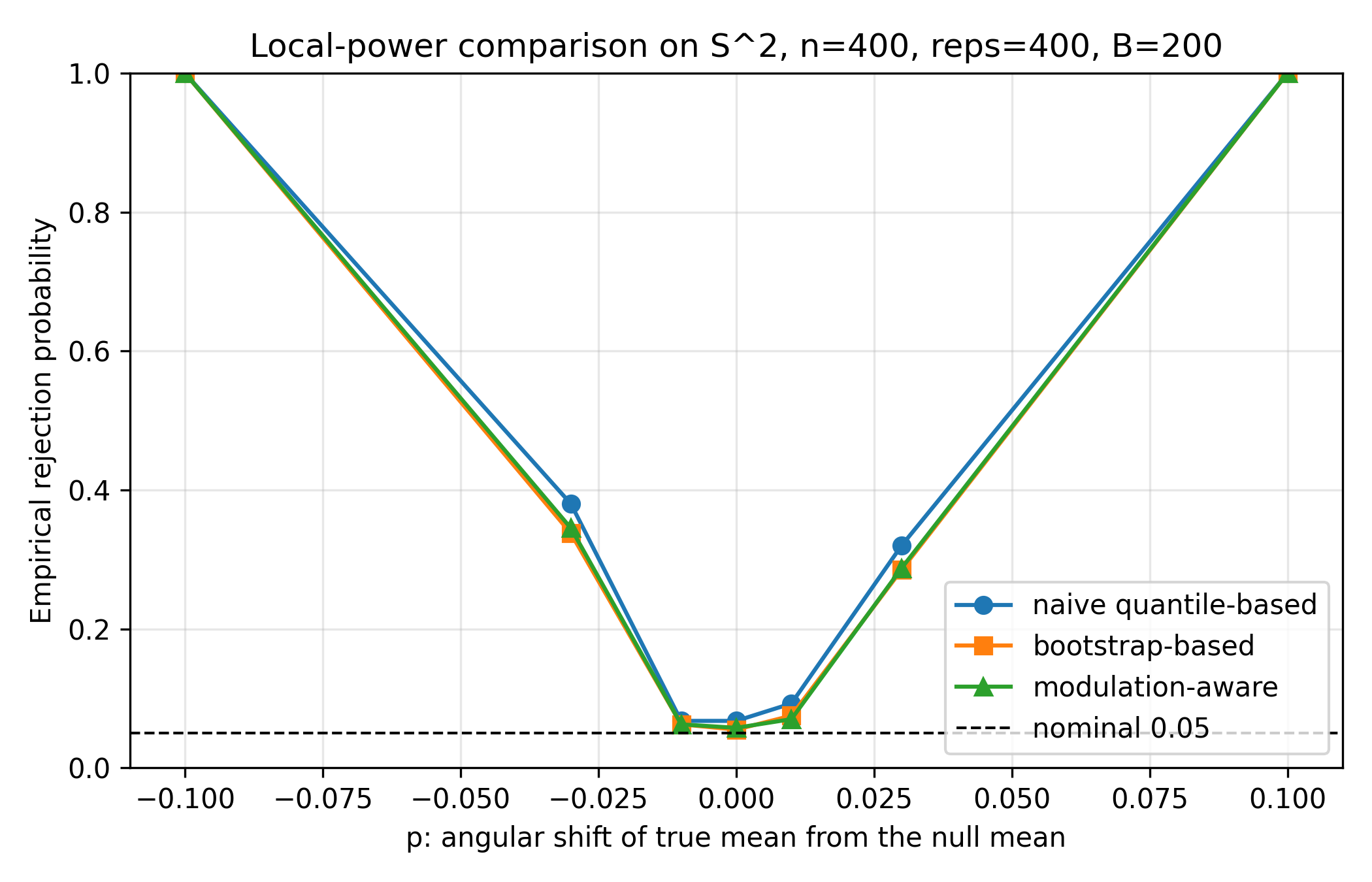}
\caption{Empirical rejection probabilities on \(\mathbb S^2\), with \(n=400\), \(400\) Monte Carlo repetitions, and \(B=200\) bootstrap resamples. The dashed horizontal line is the nominal level \(\alpha=0.05\).}
\label{fig:S2-rejection}
\end{figure}

In this low-dimensional experiment, all three tests behave reasonably near the null. At \(p=0\), the naive test rejects with probability \(0.068\), while the bootstrap-based and modulation-aware tests reject with probabilities \(0.055\) and \(0.058\), respectively. Thus the bootstrap and modulation-aware tests are slightly closer to the nominal level. For the local alternatives \(p=\pm 0.01\) and \(p=\pm 0.03\), the bootstrap-based and modulation-aware rejection probabilities are very close. In this regime, the main observation is therefore not a clear statistical separation between the two corrected tests, but rather that the Hessian-corrected statistic behaves comparably to bootstrap calibration.

\subsection{High-dimensional experiment on \texorpdfstring{\(\mathbb S^{36}\)}{S36}}
\label{subsec:numerical-S36}

We next tested the same procedure on \(\mathbb S^{36}\). The parameters were
\[
m=36,\qquad n=2500,\qquad B=30,\qquad \text{Monte Carlo repetitions}=100.
\]
The angular grid was concentrated near the null:
\[
p\in\{-0.010,-0.005,0,0.005,0.010\}.
\]
The empirical rejection probabilities are reported in Table~\ref{tab:S36-rejection} and plotted in Figure~\ref{fig:S36-rejection}.

\begin{table}[H]
\centering
\caption{Empirical rejection probabilities on \(\mathbb S^{36}\), with \(n=2500\), \(100\) Monte Carlo repetitions, and \(B=30\) bootstrap resamples.}
\label{tab:S36-rejection}
\begin{tabular}{rrrr}
\toprule
\(p\) & Naive quantile-based & Bootstrap-based & Modulation-aware \\
\midrule
\(-0.010\) & \(1.000\) & \(0.120\) & \(0.110\) \\
\(-0.005\) & \(0.950\) & \(0.080\) & \(0.070\) \\
\(0.000\)  & \(0.930\) & \(0.060\) & \(0.050\) \\
\(0.005\)  & \(0.970\) & \(0.090\) & \(0.070\) \\
\(0.010\)  & \(0.960\) & \(0.150\) & \(0.180\) \\
\bottomrule
\end{tabular}
\end{table}

\begin{figure}[H]
\centering
\includegraphics[width=0.72\textwidth]{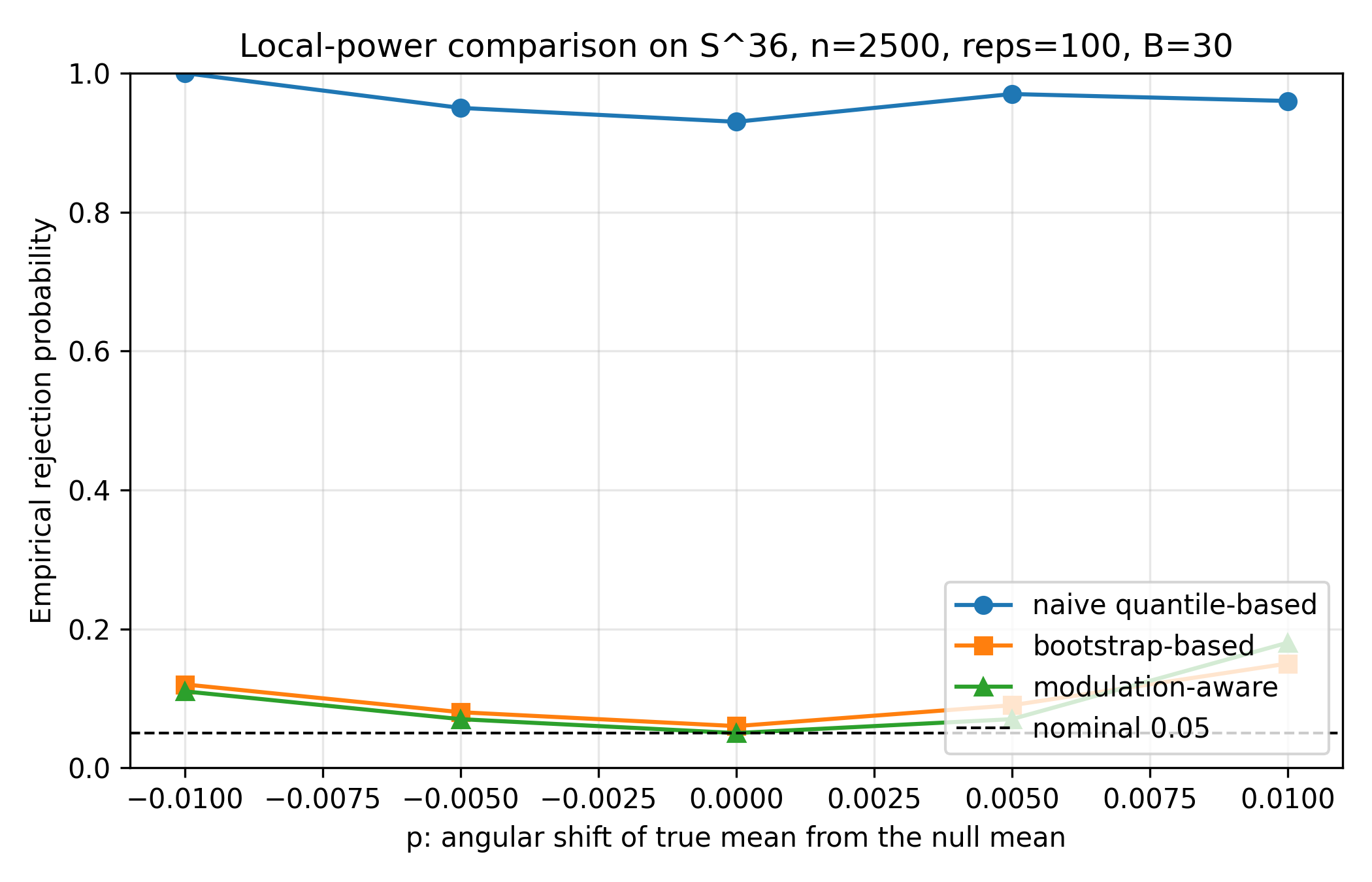}
\caption{Empirical rejection probabilities on \(\mathbb S^{36}\), with \(n=2500\), \(100\) Monte Carlo repetitions, and \(B=30\) bootstrap resamples. The dashed horizontal line is the nominal level \(\alpha=0.05\).}
\label{fig:S36-rejection}
\end{figure}

The high-dimensional behavior is substantially different from the \(\mathbb S^2\) case. The naive quantile-based test is severely miscalibrated: at the null \(p=0\), it rejects with probability \(0.930\), although the nominal level is \(0.05\). This is consistent with the curse of dimensionality for modulation studied in Theorem \ref{thm:dim_monotone_modulation_rot}.

Both corrected procedures substantially reduce this over-rejection. At \(p=0\), the bootstrap-based test rejects with probability \(0.060\), and the modulation-aware test rejects with probability \(0.050\). Thus, in this run, the modulation-aware test is exactly at the nominal level, while the bootstrap-based test is slightly above it. For \(p=\pm 0.005\), both corrected tests remain close to the nominal level. For \(p=\pm 0.010\), both tests begin to show local power, with rejection probabilities of the same order. The modulation-aware test is slightly lower in power at \(p=-0.010\), and higher at \(p=0.010\).

These numerical values should not be interpreted as proving that one corrected test uniformly dominates the other. The Monte Carlo standard error is non-negligible with \(100\) repetitions, especially for probabilities close to \(0.05\). The robust conclusion is that, in this high-dimensional experiment, the naive quantile-based test fails badly, whereas the bootstrap-based and modulation-aware tests both give a substantial correction. The modulation-aware test is comparable to bootstrap calibration in rejection behavior.

\subsubsection{Runtime comparison for the high-dimensional experiments}
\label{subsec:runtime-comparison}

Finally, we measured the runtime cost of the bootstrap-based and modulation-aware tests in the same high-dimensional setting. This timing experiment was performed separately from the rejection-probability simulation. It measures the average cost of one test call, not the runtime of the full simulation producing the curves above.

The timing parameters were
\[
m=36,\qquad n=2500,\qquad p=0,\qquad B=30,\qquad \alpha=0.05,
\]
with \(60\) timing repetitions. The results are given in Table~\ref{tab:runtime}.

\begin{table}[H]
\centering
\caption{Runtime comparison on \(\mathbb S^{36}\), with \(n=2500\), \(p=0\), and \(B=30\) bootstrap resamples. The reported values are per-test-call timings, averaged over \(60\) timing repetitions.}
\label{tab:runtime}
\begin{tabular}{lrr}
\toprule
 & Modulation-aware & Bootstrap-based \\
\midrule
Mean time per test call      & \(0.032608\) s & \(1.226873\) s \\
Median time per test call    & \(0.028348\) s & \(1.082563\) s \\
Total time over 60 calls     & \(1.956483\) s & \(73.612367\) s \\
\bottomrule
\end{tabular}
\end{table}

The ratio of mean runtimes was
\[
\frac{\text{mean bootstrap runtime}}{\text{mean modulation-aware runtime}}
=
37.62.
\]
Thus, in this implementation and hardware environment, the bootstrap-based test was approximately \(38\) times slower per test call, even with only \(B=30\) bootstrap resamples.

\begin{figure}[H]
\centering
\includegraphics[width=0.58\textwidth]{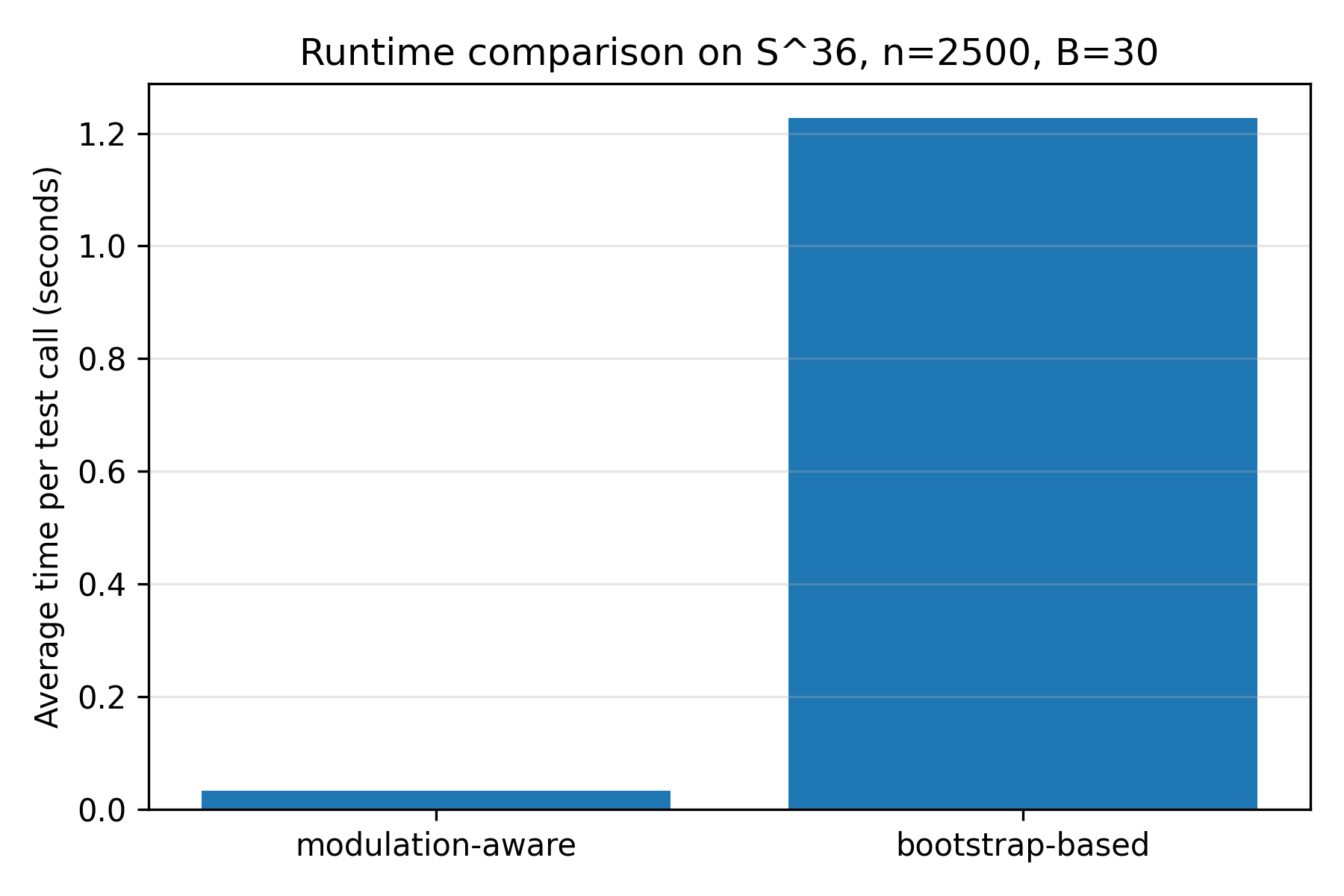}
\caption{Runtime comparison on \(\mathbb S^{36}\), with \(n=2500\), \(p=0\), and \(B=30\) bootstrap resamples. The plot reports average time per test call.}
\label{fig:runtime-comparison}
\end{figure}

These timings are implementation- and hardware-dependent. Nevertheless, the qualitative computational difference is expected. The bootstrap-based test repeatedly recomputes bootstrap Fréchet means and bootstrap test statistics, rendering it computationally expensive, see \citet{HH2}, Section 5.2. By contrast, the modulation-aware statistic uses one sample Fréchet mean together with the plug-in Hessian estimator \(H_n\) and tangential covariance estimator \(\Sigma_n\). Therefore the modulation-aware procedure avoids bootstrap calibration while retaining, in these experiments, rejection behavior comparable to the bootstrap-based correction.

\subsubsection{Summary of the numerical findings from synthetic dataset}
\label{subsec:numerical-summary}

The experiments support the following conclusions.

First, in low dimension, specifically on \(\mathbb S^2\), the bootstrap-based and modulation-aware tests have very similar rejection behavior. The naive test is only mildly more liberal near the null.

Second, in high dimension, specifically on \(\mathbb S^{36}\), the naive quantile-based test can be severely miscalibrated. In the experiment above, it rejects \(93\%\) of the time under the null. Both the bootstrap-based and modulation-aware tests substantially correct this behavior.

Third, the modulation-aware test gives bootstrap-comparable rejection probabilities in the tested regimes, while being much faster per test call. The numerical evidence therefore supports the practical value of the \textit{explicit Hessian correction}: it provides a direct non-bootstrap calibration of the quadratic form using the plug-in estimator \(H_n\) developed in Section~\ref{scn:modulation-aware-tests}.

\section{A real-data local-Euclidean diagnostic on \texorpdfstring{\(\mathbb S^{39}\)}{S39}}
\label{subsec:microtus-real-data-diagnostic}

In this section, we finally report a real-data diagnostic using landmark data from the
\textit{Microtus} lower first molar dataset of \citet{FoxVeneracionBlois2020}
and its accompanying Dryad archive \citep{FoxVeneracionBlois2020Dataset}.  The
dataset was designed to study measurement error in geometric morphometrics.
It contains two-dimensional landmark configurations of the lower first molar
from five North American \textit{Microtus} species, with \(21\) planar landmarks
per specimen.  The Dryad archive provides several repeated landmark datasets,
corresponding to different image-acquisition and digitization conditions.

Our goal in this subsection is \textit{not} to demonstrate superiority of the
modulation-aware statistic.  Instead, the aim is to check what the proposed
Hessian correction does on a real preshape-sphere dataset which turns out to be
highly concentrated around its Fr\'echet mean.  This gives a useful
``local-Euclidean'' sanity check: if the data occupy a small neighborhood of the
sphere, then the sphere should behave almost like its tangent space, the
empirical Hessian should be close to \(2I\), and the Hessian-corrected statistic
should reduce, up to a very small numerical difference, to the usual Hotelling
statistic.

We used the file
\[
\texttt{ProcANOVA/Nikon\_NoTilt\_EO\_T2.TPS}
\]
from the Dryad archive.  This gives \(N=247\) configurations, each consisting of
\(21\) two-dimensional landmarks.  Each configuration was centered by subtracting
its landmark centroid and then scaled to unit centroid size.  Thus each specimen
was represented as a preshape point on
\(
\mathbb S^{39}\subset \mathbb R^{40},
\)
since \(42\) raw coordinates are reduced by the two centering constraints and
then normalized to have unit norm.  We do not quotient by rotations here; the
purpose of this diagnostic is to work directly on the preshape sphere, where the
results of the preceding sections apply.  This is the standard preshape-sphere
representation used in landmark shape analysis; see, for example,
\citet{DrydenMardia2016}.

We then repeated the following random splitting procedure \(B=1000\) times.
In each repetition, we randomly divided the \(247\) observations into two disjoint groups with group sizes \(n_1=123\) and \(n_2=124\).  Since the partition is independent of the observed configurations, the artificial group labels introduce no systematic difference between the two groups. Thus this random-split experiment represents a null comparison within
the pooled dataset.

Let
\(\hat\mu_{1,b}\) and \(\hat\mu_{2,b}\) be the two\textit{ groupwise} sample Fr\'echet
means, and let \(\hat\mu_{0,b}\) be the\textit{ pooled} sample Fr\'echet mean.  We formed
the tangent-space difference
\[
D_b
:=
\log_{\hat\mu_{0,b}}(\hat\mu_{1,b})
-
\log_{\hat\mu_{0,b}}(\hat\mu_{2,b})
\in T_{\hat\mu_{0,b}}\mathbb S^{39}.
\]
Writing \(\widehat S_{j,b}\) for the empirical tangent covariance matrix of
\(\log_{\hat\mu_{0,b}}(X_{j,i})\) in group \(j\), an estimator of the covariance of the difference between the two sample mean vectors, namely \(D_b\) is
\[
\widehat C^{\mathrm{naive}}_b
:=
\frac{\widehat S_{1,b}}{n_1}
+
\frac{\widehat S_{2,b}}{n_2}.
\]
The corresponding naive two-sample Hotelling statistic is
\[
T^{\mathrm{naive}}_b
:=
D_b^T
\bigl(\widehat C^{\mathrm{naive}}_b\bigr)^{\dagger}
D_b,
\]
where \(A^\dagger\) denotes the Moore--Penrose inverse.  In the present
experiment all reported covariance ranks were \(39\), so the use of the
pseudoinverse is only a numerical safeguard.

For the \textit{modulation-aware} statistic, we used the groupwise plug-in Hessian
estimators \(\widehat H_{1,b}\) and \(\widehat H_{2,b}\) at the pooled base point
\(\hat\mu_{0,b}\).  The \textit{Hessian-corrected covariance estimator} is
\[
\widehat C^{\mathrm{mod}}_b
:=
\frac{
4\widehat H_{1,b}^{-1}\widehat S_{1,b}\widehat H_{1,b}^{-1}
}{n_1}
+
\frac{
4\widehat H_{2,b}^{-1}\widehat S_{2,b}\widehat H_{2,b}^{-1}
}{n_2}.
\]
The modulation-aware statistic is then
\[
T^{\mathrm{mod}}_b
:=
D_b^T
\bigl(\widehat C^{\mathrm{mod}}_b\bigr)^{\dagger}
D_b.
\]
This is the two-sample analogue of the Hessian-corrected statistic from
Theorem~\ref{thm:two-sample-mod-aware-hotelling}.

There is one additional finite-sample point which is important in this real-data
experiment.  The tangent dimension is \(39\), while the total sample size is only
\(N= n_1+ n_2= 247\).  Therefore the crude large-sample \(\chi^2_{39}\) calibration is not
expected to be accurate for an ordinary Hotelling statistic, because the
covariance matrix is estimated from the data.  To avoid confusing this classical
finite-sample Hotelling effect with the geometric Hessian correction, we also
used the usual Hotelling-style \(F\)-calibration
\[
F_b
=
\frac{N-r-1}{r(N-2)}\,T_b = \frac{n_1+n_2-r-1}{r(n_1+n_2-2)}\,T_b,
\qquad
F_b\ \text{compared with}\ F_{r,N-r-1},
\]
where \(r=39\) is the numerical rank of the covariance matrix.  Thus here
\(F_b\) is compared with an \(F_{39,207}\) distribution.  For the \textit{naive} statistic
this is the standard finite-sample Hotelling calibration in the Euclidean
model.  For the Hessian-corrected statistic we use the same calibration only as
a finite-sample diagnostic; the asymptotic result proved in
Theorem~\ref{thm:two-sample-mod-aware-hotelling} remains the \(\chi^2\)
calibration.

For a nominal level \(\alpha\), let \(p^{\mathrm{naive}}_{b,F}\) and
\(p^{\mathrm{mod}}_{b,F}\) denote the upper-tail \(p\)-values obtained in the
\(b\)-th random split from the \(F_{39,207}\)-calibrations of the naive and
modulation-aware statistics, respectively; explicitly,
\(p^{\mathrm{naive}}_{b,F}
=\Pr\!\left(F_{39,207}\ge \frac{207}{39\cdot245}T_b^{\mathrm{naive}}\right)\),
and analogously for \(p^{\mathrm{mod}}_{b,F}\) with
\(T_b^{\mathrm{mod}}\).  Likewise, let
\(p^{\mathrm{naive}}_{b,\chi^2}\) and \(p^{\mathrm{mod}}_{b,\chi^2}\)
denote the corresponding \(p\)-values obtained from the
\(\chi^2_{39}\)-calibrations.  The four empirical rejection rates are
\[
\begin{aligned}
\widehat\alpha_{\mathrm{naive},F}(\alpha)
&=
\frac1B\sum_{b=1}^B
\mathbf 1\{p^{\mathrm{naive}}_{b,F}<\alpha\},
&
\widehat\alpha_{\mathrm{mod},F}(\alpha)
&=
\frac1B\sum_{b=1}^B
\mathbf 1\{p^{\mathrm{mod}}_{b,F}<\alpha\},\\
\widehat\alpha_{\mathrm{naive},\chi^2}(\alpha)
&=
\frac1B\sum_{b=1}^B
\mathbf 1\{p^{\mathrm{naive}}_{b,\chi^2}<\alpha\},
&
\widehat\alpha_{\mathrm{mod},\chi^2}(\alpha)
&=
\frac1B\sum_{b=1}^B
\mathbf 1\{p^{\mathrm{mod}}_{b,\chi^2}<\alpha\}.
\end{aligned}
\]
The results are given in
Table~\ref{tab:microtus-real-data-rejection}.  The \(F\)-calibrated rejection
rates are close to their nominal levels for both statistics.  By contrast, the
large-sample \(\chi^2\) calibration is much too liberal in this finite-sample
setting.

\begin{table}[H]
\centering
\caption{Random null-split calibration on the \textit{Microtus}
preshape-sphere dataset.  The experiment used \(B=1000\) random splits of
one measurement condition, with \(N=247\) observations on
\(\mathbb S^{39}\).  The \(F\)-calibrated columns are the main finite-sample
diagnostic.  The \(\chi^2\)-calibrated columns are shown only to illustrate
the over-rejection caused by using the large-sample Hotelling approximation
in this dimension and sample size.}
\label{tab:microtus-real-data-rejection}
\begin{adjustbox}{max width=\textwidth}
\begin{tabular}{rrrrr}
\toprule
Nominal level \(\alpha\)
& \(\widehat\alpha_{\mathrm{naive},F}(\alpha)\)
& \(\widehat\alpha_{\mathrm{mod},F}(\alpha)\)
& \(\widehat\alpha_{\mathrm{naive},\chi^2}(\alpha)\)
& \(\widehat\alpha_{\mathrm{mod},\chi^2}(\alpha)\) \\
\midrule
\(0.01\) & \(0.011\) & \(0.010\) & \(0.094\) & \(0.087\) \\
\(0.03\) & \(0.029\) & \(0.027\) & \(0.175\) & \(0.171\) \\
\(0.05\) & \(0.045\) & \(0.044\) & \(0.224\) & \(0.216\) \\
\(0.07\) & \(0.072\) & \(0.065\) & \(0.270\) & \(0.268\) \\
\(0.10\) & \(0.101\) & \(0.096\) & \(0.320\) & \(0.317\) \\
\bottomrule
\end{tabular}
\end{adjustbox}
\end{table}

We next explain why the naive and modulation-aware columns are almost identical.
Let
\[
Z_i:=\log_{\hat\mu_0}(X_i)\in T_{\hat\mu_0}\mathbb S^{39}
\]
denote the tangent vectors at the full-sample Fr\'echet mean.  The average
geodesic radius of the data cloud is small:
\[
\frac1N\sum_{i=1}^N d(X_i,\hat\mu_0)
=
0.084761\ \text{radians}
=
4.856^\circ,
\]
and even the largest observed radius is only
\[
\max_i d(X_i,\hat\mu_0)
=
0.279323\ \text{radians}
=
16.004^\circ.
\]
The total tangent variance is also small:
\[
\operatorname{tr}(\widehat\Sigma)=0.008423.
\]
Thus the observed landmark configurations occupy a small region of the preshape
sphere.  In such a local region, the sphere is well approximated by its tangent
space.

This concentration is also visible directly in the Hessian.  The full-sample
empirical Hessian satisfies
\[
\lambda_{\min}(\widehat H)=1.994404,\qquad
\lambda_{\mathrm{mean}}(\widehat H)=1.994544,\qquad
\lambda_{\max}(\widehat H)=1.997522.
\]
Hence \(\widehat H\) is extremely close to \(2I\):
\[
\frac{\|\widehat H-2I\|_F}{\|2I\|_F}
=
0.002739.
\]

This is exactly the local-Euclidean behavior predicted by the general plug-in
Hessian estimator in Equation~\eqref{eqn:Hess-est-gen}, which is used here.
Applied to the full sample at the base point \(\hat\mu_0\), that formula has
\(m=39\), \(n=N\), and \(R_{i,N}=d(\hat\mu_0,X_i)\).  Since
\(R_{i,N}\cot R_{i,N}
=1-R_{i,N}^2/3+O(R_{i,N}^4)\) as \(R_{i,N}\to0\), the difference between the
\(i\)th bracketed term in Equation~\eqref{eqn:Hess-est-gen} and \(I_{39}\) is
\(O(R_{i,N}^2)\). Consequently, as \(\max_i R_{i,N}\to0\),
\(\|\widehat H-2I_{39}\|_F
=O\!\left(N^{-1}\sum_{i=1}^N R_{i,N}^2\right)\).
The small observed radii therefore directly explain why \(\widehat H\) is close
to the Euclidean value \(2I_{39}\).

The numerical effect on the covariance estimator is consequently very small.
Across the \(1000\) random splits,
\[
\operatorname{median}_{b}
\frac{
\|\widehat C^{\mathrm{mod}}_b-\widehat C^{\mathrm{naive}}_b\|_F
}{
\|\widehat C^{\mathrm{naive}}_b\|_F
}
=
0.002680.
\]
Thus the Hessian-corrected covariance differs from the naive covariance by only
about \(0.27\%\) in relative Frobenius norm.  The main local-Euclidean
diagnostics are summarized in Table~\ref{tab:microtus-local-euclidean}.

\begin{table}[H]
\centering
\caption{Local-Euclidean diagnostics for the \textit{Microtus} preshape-sphere dataset.  The small geodesic radii show that the data are concentrated near their Fr\'echet mean.  The Hessian eigenvalues are almost \(2\), and the relative change from the naive covariance to the modulation-aware covariance is about \(0.27\%\).}
\label{tab:microtus-local-euclidean}
\begin{adjustbox}{max width=\textwidth}
\begin{tabular}{lr}
\toprule
Diagnostic quantity & Value \\
\midrule
Mean geodesic radius \(\frac1N\sum_i d(X_i,\hat\mu_0)\) & \(0.084761\) rad \((4.856^\circ)\) \\
Median geodesic radius & \(0.078604\) rad \((4.504^\circ)\) \\
Maximum geodesic radius & \(0.279323\) rad \((16.004^\circ)\) \\
Mean squared geodesic radius \(\frac1N\sum_i d^2(X_i,\hat\mu_0)\) & \(0.008389\) \\
Trace of tangent covariance \(\operatorname{tr}(\widehat\Sigma)\) & \(0.008423\) \\
Smallest covariance eigenvalue \(\lambda_{\min}(\widehat\Sigma)\) & \(0.000006\) \\
Largest covariance eigenvalue \(\lambda_{\max}(\widehat\Sigma)\) & \(0.004692\) \\
Smallest Hessian eigenvalue \(\lambda_{\min}(\widehat H)\) & \(1.994404\) \\
Mean Hessian eigenvalue \(\lambda_{\mathrm{mean}}(\widehat H)\) & \(1.994544\) \\
Largest Hessian eigenvalue \(\lambda_{\max}(\widehat H)\) & \(1.997522\) \\
Relative Hessian deviation \(\|\widehat H-2I\|_F/\|2I\|_F\) & \(0.002739\) \\
Median relative covariance change
\(\operatorname{median}_b\|\widehat C_b^{\mathrm{mod}}-\widehat C_b^{\mathrm{naive}}\|_F/
\|\widehat C_b^{\mathrm{naive}}\|_F\)
& \(0.002680\) \\
\bottomrule
\end{tabular}
\end{adjustbox}
\end{table}

The conclusion is therefore deliberately modest.  This real-data experiment does
not show a practical separation between the naive and modulation-aware
statistics.  Rather, it shows that on this \textit{Microtus} measurement
condition the data are so concentrated near the Fr\'echet mean that the
preshape sphere is effectively locally Euclidean.  In this regime,
\[
\widehat H\approx 2I,
\qquad
4\widehat H^{-1}\widehat\Sigma\widehat H^{-1}\approx\widehat\Sigma,
\]
and hence the Hessian-corrected statistic reduces numerically to the usual
Hotelling statistic.  This is consistent with the geometric interpretation of
the modulation correction: it matters when curvature changes the covariance of
the Fr\'echet mean, and it disappears when the data lie in a small region where
the manifold behaves almost Euclidean.

\appendix
\section{Symbolic Computation Code}\label{scn:symbolic-computation}

The following Python (SymPy) code was used to compute the Taylor coefficients:

\begin{verbatim}
import sympy as sp

# Symbols
t, a, R, alpha = sp.symbols('t a R alpha', real=True)
r = t*a

X = (sp.cos(r)*sp.cos(R)
     + (sp.sin(r)/r)*(sp.sin(R)/R)*(t*alpha))

f = sp.acos(X)**2

series_f = sp.series(f, t, 0, 5)
series_noO = sp.expand(series_f.removeO())

# Extract coefficients
coeffs = {}
for k in range(5):
    ck = sp.simplify(series_noO.coeff(t, k))
    coeffs[k] = ck
    print(f"C{k} =", ck)
\end{verbatim}

\bibliographystyle{plainnat}   
\bibliography{references}      

\end{document}